\newtheorem{theorem}{Theorem}
\newtheorem{lemma}[theorem]{Lemma}
\newtheorem*{implications}{Implications}
\newtheorem{proposition}[theorem]{Proposition}
\newtheorem{corollary}[theorem]{Corollary}
\newtheorem*{putative*}{Putative Theorem}
\newtheorem{question}{Question}
\newtheorem{conjecture}[question]{Conjecture}
\newcommand\MJWPsi{F}
\newcommand\MJWPhi{G}
\newcommand\eps{\varepsilon}
\begin{document}

\title{Lion and Man -- Can Both Win? }  \author{
  B. Bollob\'as$^\dag$\thanks{University of Memphis, Department of
    Mathematics, Dunn Hall, 3725 Noriswood, Memphis, TN 38152, USA.}
  \and I. Leader\thanks{Department of Pure Mathematics and
    Mathematical Statistics, University of Cambridge, Cambridge CB3
    0WB, UK} \and M. Walters\thanks{Queen Mary University of London,
    London E1 4NS.}  } \maketitle
\begin{abstract}
This paper is concerned with continuous-time pursuit and evasion games. 
Typically, we
have a lion and a man in a metric space: they have the same speed, and
the lion wishes to catch the man while the man tries to evade capture.
We are interested in questions of the following form: is it the case that
exactly one of the man and the lion has a winning strategy?

As we shall see, in a compact metric space at least one of the players has a 
winning strategy. We show that, perhaps surprisingly, there are examples 
in which both players have
winning strategies. We also construct a metric space in which, 
for the game with
two lions versus one man, neither player has a winning strategy. We prove
various other (positive and negative) related results, and pose some
open problems. 

\end{abstract}

\section{Introduction}

Rado's famous `Lion and Man' problem (see [8, pp.~114-117]
or [2, pp.~45-47]) is as follows. A lion and a man (each
viewed as a single point) in a closed disc have equal maximum speeds; 
can the lion catch the
man? This has been a well known problem since at least the 1930s -- 
it was popularised extensively by Rado and subsequently by Littlewood.
The reader not familiar with this problem is urged to give it a
few minutes' thought before proceeding further.

For the `curve of pursuit' (the lion always running directly towards
the man) the lion gets arbitrarily close to the man but does not ever
catch him. However, the apparent `answer' to the problem is that the
lion can win by adopting a different strategy, namely that of staying
on the same radius as the man. In other words, the lion moves, at top
speed, in such a way that he always lies
on the radius vector from the centre to the man. If we assume, as
seems `without loss of generality', that the man stays on the boundary
of the circle, then it is easy to check that the lion does now catch
the man in finite time.  Indeed, in the time that it
takes the man to run a quarter-circle
at full speed the lion (say starting at the centre) performs a
semicircle of half the radius of the disc, thus catching the man.

This `answer' was well known, but in 1952 Besicovitch (see 
[8, pp.~114-117]) showed that it is wrong to assume that the man
should stay on the boundary, and that in fact the man can survive
forever. His beautiful argument goes as follows.  We split time into a
sequence of intervals, of lengths $t_1,t_2,t_3,\ldots$. At the $i$th
step the man runs for time $t_i$ in a straight line that is
perpendicular to his radius vector at the start of the step. He
chooses to run into the half plane that does not contain the lion (if
the lion is on the radius then either direction will do). So certainly
the lion does not catch the man in this time step. The man then
repeats this procedure for the next time step, and so on.
%
%

Now, note that if $r_i$ is the distance of the man from the origin at the start
of the $i$th time step then $r_{i+1}^2=r_i^2+t_i^2$. Hence as long as
$\sum_i t_i$ is infinite then the man is never caught, and if
$\sum_i t_i^2$ is finite then the $r_i$ are bounded, so that (multiplying by
a constant if necessary) the man does not leave the arena. So taking for
example $t_i=1/i$ we have a winning strategy for the man.

We pause for a moment to reassure the reader that all of these terms like
`winning strategy' have a precise definition, and indeed there is really 
only one
natural choice for the definitions. Thus  
a {\it lion path} is a function $l$ from
$[0,\infty)$ to the closed unit disc $D$ such that $\vert l(s)-l(t) \vert
  \leq \vert s-t \vert$ for all $s$ and $t$ (in other words, the path
  is `Lipschitz' -- this corresponds to the lion having maximum speed
  say 1) and with $l(0)=x_0$ for some fixed $x_0$ in the disc (as, for
  definiteness, a starting point should be specified). A {\it man path}
is defined similarly. 
We write $L$ for the set of lion paths and $M$ for
  the set of man paths. Then a {\it strategy} for the lion is a
  function $\MJWPhi$ from $M$ to $L$ such that if $m,m' \in M$ agree on
  $[0,t]$ then also $\MJWPhi(m)$ and $\MJWPhi(m')$ agree on $[0,t]$. This
  `no lookahead' rule is saying that $\MJWPhi(m)(t)$
  depends only on the values of $m(s)$ for $0 \leq s \leq t$ (or
  equivalently, by continuity of $m$, that it depends only on the
  values of $m(s)$ for $0 \leq s < t$).  A strategy $G$ for the lion 
is a {\it winning
    strategy} if for every $m \in M$ there is a time $t$ with
  $\MJWPhi(m)(t)=m(t)$. We make corresponding definitions for the man.
Note
  that all of these definitions also make sense with the disc replaced by
  an arbitrary metric space $X$. We will usually suppress the dependence on
the starting points, speaking for example about just `the game on $X$', 
since for most results the actual starting points are
irrelevant (as long as they are distinct, of course).

Let us briefly remark that it would be quite wrong to use a different 
definition of `lion strategy' (for example) by insisting on some kind of 
finite delay, so
that the lion's position at time $t$ depended on the man's position at times
up to $t-\eps$ or some such. For then one would be disallowing such natural
strategies as `aim for the man' or `keep on the same radius as the man', and
thus one would be fundamentally changing the nature of the problem. (The 
relationship between strategies and finite delays 
will in fact be considered in some detail in Section 2.)

We now ask the question that motivates the work of this paper. We have
seen that the man has a winning strategy (the Besicovitch strategy); could
it be that the lion also has a winning strategy?

At first sight, this seems an absurd question to ask: after all, if both
players have winning strategies then let us consider a play of the game in
which each is following his winning strategy, and ask who wins? But a
moment's careful thought reveals that this `proof' is in fact nonsense.
For what would it mean to have a play of the game in which `each player
followed his strategy'? If the lion is using strategy $\MJWPhi$ and the man
is using strategy $\MJWPsi$, then we would need paths $l \in L$ and $m \in M$
such that $l=\MJWPhi(m)$ and $m=\MJWPsi(l)$. And there is no {\it a priori}
reason why such a common fixed point should exist. (There is also no reason
why such a common fixed point, if it exists, should be unique, but that is
a different aspect.)

Now, it turns out that, in this particular case, the `local finiteness' of
the Besicovitch strategy means that it is quite simple to show that the lion
cannot also have a winning strategy (see Section 2). But what would happen in
a different space (in other words, with the closed disc replaced by a
different metric space)? Indeed, in a different space, why should it even be 
true
that at least one of the man and the lion has a winning strategy?

One might imagine that this is merely some `formal nonsense', and that, once
thought about from the correct viewpoint, it would become clear that exactly
one of the lion and the man has a winning strategy. Surprisingly, this
is not the case.

The plan of the paper is as follows. We start in Section 2 by considering the
bounded-time version of the lion and man game (the man wins if he can stay
alive until some fixed time $T$). In this case, if we considered what
one might call the `discrete' version of the game, in which the two players
take turns to move, each move being a path lasting for time $\epsilon$
(for a fixed $\epsilon >0$), then we would be
in the world of finite-length games, and here of course no pathology
can occur. So it is natural to seek to approximate the continuous game
by the discrete version. Using this approach, we are able to show
that, in a compact metric space, at least one player has a winning
strategy for the bounded-time game. (Curiously, at one point the
argument seems to make essential use of the Axiom of Choice.) 
We do not
see how to extend our result to the original unbounded-time problem.

The methods of Section 2 seem to come very close to proving that it is
also true that at most one player can have a winning strategy. Indeed,
as we shall see, it seems that one is only an `obvious' technical lemma
away from proving this. But it turns out that this technical lemma is
not true. And in fact in Section 3 we present some
examples of metric spaces (even compact ones) in which both players
have winning strategies -- in the strongest possible sense, namely that
the lion can guarantee to catch the man by a fixed time $T$ and the man
can guarantee to stay alive forever. Interestingly, the key here is to 
understand the nature of the required strategies; the spaces themselves are not
particularly pathological.
 
In Section 4 we consider a related game that we call `race to a
point'. Two players, with equal top speeds, start at given points in a
metric space, and race towards a given target point. The first to
arrive is the winner (with the game a draw if they reach the target at
the same time or if neither reaches the target). Of course, if the
metric space is compact then each player has a shortest path to the
target (as long as he has {\it some} path to the target of finite length, 
that is), so that either exactly one player has a winning strategy or
else both players have drawing strategies. Thus the interest is in
noncompact spaces. One could view race to a point as a `simplified' 
version of lion and man, in the sense that the motion is towards a
fixed point (as opposed to towards or away from a moving point). 

We give an example of a space in which both players
have winning strategies for race to a point. We also give examples to show
that, perhaps more 
unexpectedly, there are spaces in which 
neither player has even a drawing strategy. We also
show how this game relates to the lion and man: based on our `race to
a point' examples we give a metric space in which, for the game of two
lions versus a man, neither side has a winning strategy.

In Section 5 we give a number of open problems.

Finally, in an appendix we prove some related results about `local 
finiteness':  we discuss
more general locally finite phenomena (like the Besicovitch strategy), as
well as giving applications of this to other games such as 
`porter and student', 
where one player seeks to leave a  region via a specified boundary and the
other player wishes to catch him the instant he reaches this boundary.
\bigskip

There has been a considerable amount of work on the lion and man
problem and related questions. 
For example, Croft~\cite{Croft} showed that if the man's path
is forced to have uniformly bounded curvature then the lion can catch
the man (although, strangely, the `stay on the same radius' strategy
does not achieve this).  Croft also showed that in the $n$-dimensional
Euclidean ball $n$ lions can catch the man while the man can escape
from $n-1$ lions. For some interesting versions played in a quadrant of the 
plane, see Sgall~\cite{Sgall}. There are also quantitative 
estimates about
how long it takes the lion to get within a certain distance of the
man: see for example \cite{Alonso}.

There is also a large body of work on `differential
games': see the beautiful book of Isaacs~\cite{Isaacs} for a thorough 
introduction, and Lewin~\cite{Lewin} for some applications to lion and man. For
results about (discrete) pursuit and evasion in general metric spaces, see
Mycielski~\cite{Mycielski}. However, none of the above appear to have
considered the particular questions we address here. 

Finally, it is
worth pointing out that our results do not seem to be related to
results about determinacy of infinite games in Descriptive Set Theory 
(see for example
Jech~\cite{Jech}), with the exception that our construction for the
`race to a point' game in Section 4 has perhaps some of the flavour of
some constructions of infinite games in which neither player has a
winning strategy.

\section{Finite Approximation}\label{s:finite_approximation}
We start by showing that, owing to the nature of the Besicovitch strategy,
there cannot be a winning strategy for the lion in the original 
lion and man game in the closed unit disc.

Let $\MJWPhi$ be a lion strategy and let $\MJWPsi$ be the (winning)
Besicovitch strategy defined earlier. We aim to play these two
strategies against each other: that is, to find a lion path $l$ and a man
path $m$ with $l=\MJWPhi(m)$ and $m=\MJWPsi(l)$. By definition of the
Besicovitch strategy, the man's path is determined for time $t_1$: we may write
this path as $m|_{[0,t_1]}$ (with slight abuse of notation, as we do not
yet know that $m$ exists). Now, given
$m|_{[0,t_1]}$, the lion's strategy determines $l|_{[0,t_1]}$, and, in
particular, determines $l(t_1)$. The Besicovitch strategy now tells the man 
what to do for
time $t_2$: that is, $m|_{[0,t_1+t_2]}$ is determined, so as before the
lion's path is determined up until $t_1+t_2$.

Repeating in this way we obtain the desired pair of paths: since the
man's strategy is winning we know that $l(t)\not=m(t)$ for all
$t$. That is, the lion does not win, so the lion's strategy is not a
winning strategy. 

Note that, in this proof, we used the special `discrete' nature of
the man's strategy: the man committed to doing something for some
positive amount of time. Many sensible strategies are not of this
form: indeed, in the strategy we gave earlier for the lion of `stay
on the same radius', the lion's position continually depends on where
the man is now (or, equivalently, where he was at all earlier times).

However, if one were to insist that strategies {\it should} be in some sense 
discrete,
or that there was some `delay' (the man's position at time $t$ being allowed 
to depend only on the
lion's position at times before $t-\eps$ for some fixed $\eps>0$, and
vice versa), then there should be no problem in proving that exactly one
player has a winning strategy. One might view this kind of restriction as
arising from a `real world' simplification of the problem.  

Based on this, let us try to approximate the real game by a discrete
version, as follows. Before we do so, we will introduce one change to
the game: in the {\it bounded-time lion and man game} in space $X$
there is a fixed parameter $T>0$, and the lion wins if he has caught
the man by time $T$ while the man wins otherwise.  For the rest of
this section, we shall be considering the bounded-time game.
 
Let $X$ be a metric space. Fix $\eps>0$ such that $T$ is an integer multiple 
of $\eps$: say
$T=n \eps$. In the {\it discrete bounded-time game} on $X$ the two
players take turns to move, with say the lion moving first. On a turn, the
player runs for time $\eps$ -- or, more precisely, he chooses a path of
path length at most $\eps$, starting at his current position, and runs along
it to its end.
The game ends after each player has had $n$ turns; the `outcome' of the game
is defined to be the closest distance $d$ that occurred between the lion and
man at any time. 

Since this is a finite game (a game lasting for a fixed finite number of
moves), it is easy to see (for example, by `backtracking')
that 
there is a $\delta=\delta(\eps)$ such that for any $\delta'<\delta$
the man has a strategy that ensures $d$ is at least $\delta'$ and for
any $\delta'>\delta$ the lion has a strategy that ensures $d<\delta'$.

[We remark in passing that this would remain the case if we were considering
a discrete version of the unbounded-time game, by virtue of the fact
that Borel games are determined (see e.g.\ Jech \cite{Jech}). However, 
it turns out that
this does not seem to help with the analysis of the unbounded-time game.]

As we are interested in the relation of this discrete game to the
original `continuous' game, it is natural to consider $\eps\to
0$ (of course, only through values that divide $T$). One would hope that 
$\delta\to 0$ as $\eps\to 0$ corresponds to a
lion win in the continuous game. More precisely, one would hope 
that the following four implications hold.
\begin{implications} 
  For the bounded-time game on the metric space $X$, with $\delta$ and 
$\eps$ as above, we have
\begin{enumerate} 
  \item\label{ldc} If $\delta\to 0$ as $\eps\to0$ in the discrete game
    then the lion wins the continuous game.
  \item\label{mdc} If $\delta\not\to 0$ as $\eps\to0$ in the 
discrete game then the
    man wins the continuous game.
  \item\label{lcd} If the lion wins the continuous game then
    $\delta\to0$ as $\eps\to0$ in the discrete game.
  \item\label{mcd} If the man wins the continuous game then
    $\delta\not\to0$ as $\eps\to0$ in the discrete game.
\end{enumerate}
\end{implications}
Now, as we shall see, Implications
\ref{mdc} and \ref{lcd} are trivial. Implication~\ref{ldc} is true, if $X$ is
compact, but
seems to be not quite trivial; indeed, our proof needs the Axiom of
Choice. Combining Implications~\ref{ldc} and~\ref{mdc} we see that at
least one player has a winning strategy for the bounded-time
game played in a compact space.

Under some extra, seemingly mild, conditions the final implication ~\ref{mcd}
is easy to prove. Surprisingly, however, it is false in general, as we see
in Section 3.

We remark that for our purposes it does not matter who moves first
in the discrete-time game. Indeed, allowing the man to move first, or
equivalently forcing the lion to stay where he is for his first move, will
only change the value of $\delta$ by at most $\eps$, and so will not change
whether or not $\delta\to0$ as $\eps\to0$.

Note that a strategy for a player in the $\eps$-discrete game (with that player
moving first) gives rise naturally to a strategy for that player in the
continuous game. Conversely, a strategy for a player in the continuous game
gives rise naturally to a strategy for that player in the $\eps$-discrete
game (with that player moving second). Here the restrictions about who
moves first are to ensure that the no-lookahead rule is not violated.
Note also that these changes from discrete to continuous or vice versa change
the closest distance between lion and man (in any play of the game) by at
most $\eps$.

In Lemmas~\ref{l:mdc}--\ref{l:aa} and Corollary~\ref{c:ldc} we fix a
metric space $X$ and we consider only bounded-time games.

\begin{lemma}\label{l:mdc}
  Suppose that
  $\delta\not\to0$ as $\eps\to0$. Then the man has a winning strategy
  in the continuous game.
\end{lemma}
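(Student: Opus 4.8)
The hypothesis is that $\delta(\eps) \not\to 0$ as $\eps \to 0$ (through values dividing $T$). This means there is some $c > 0$ and a sequence $\eps_k \to 0$ such that $\delta(\eps_k) \geq c$ for all $k$. For each such $\eps_k$, the man has a strategy in the discrete $\eps_k$-game (moving second, to respect the no-lookahead rule) that guarantees the closest lion--man distance over the whole play is at least, say, $c/2$. By the remark in the text, this discrete strategy induces a strategy for the man in the continuous game, at the cost of changing the closest distance by at most $\eps_k$; so the man has a continuous strategy $\MJWPsi_k$ guaranteeing closest distance at least $c/2 - \eps_k \geq c/4$ (for $k$ large). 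The plan is to extract, by a diagonal/compactness argument, a single continuous man strategy that survives forever — actually we only need it to survive until time $T$, since this is the bounded-time game.

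**The main step: diagonalization over lion paths.** The difficulty is that a ``strategy'' is a function on the uncountable set $L$ of lion paths, so we cannot directly take a limit of the $\MJWPsi_k$. Instead I would argue as follows. Suppose for contradiction the man has \emph{no} winning strategy in the continuous bounded-time game. I would like to derive that the lion can force capture by time $T$, i.e. that for \emph{every} continuous man strategy there is a lion path catching the man — but more usefully, I want to contradict the uniform lower bound $c/4$. The cleanest route: fix a large $k$ and consider the induced continuous man strategy $\MJWPsi_k$. Against $\MJWPsi_k$, by assumption (man has no winning strategy) there is some lion path $l$ with $l(t) = \MJWPsi_k(l)(t)$ for some $t \le T$. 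Now discretize this lion path $l$ to the $\eps_k$-grid: let $\tilde l$ be the lion's discrete play in the $\eps_k$-game obtained by having the lion, on each turn, move to $l$'s position at the next grid point (this is a legal discrete move since $l$ is $1$-Lipschitz). Playing $\tilde l$ against the man's discrete strategy produces, by construction, exactly the play that $\MJWPsi_k$ was built to mirror in the continuous game; so the closest distance in the \emph{discrete} play is within $\eps_k$ of the closest distance $0$ in the continuous play, i.e.\ at most $\eps_k < c/2$. This contradicts the defining property of $\delta(\eps_k) \ge c$, that the man's discrete strategy keeps the distance $\ge c/2 > \eps_k$.

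**Where the real work is.** The genuine obstacle is making precise the passage ``continuous capture against $\MJWPsi_k$ $\Rightarrow$ discrete near-capture against the discrete strategy.'' One must check carefully that the lion path $l$ witnessing continuous capture, when sampled at the grid, really does reproduce the discrete play that induced $\MJWPsi_k$ — this uses that $\MJWPsi_k(l)$ on $[j\eps_k, (j+1)\eps_k]$ depends only on $l|_{[0,j\eps_k]}$ (the man moves second in the discrete game, so his $j$-th move depends only on the lion's first $j$ moves), which is exactly why the ``move second'' convention was imposed. A secondary subtlety is compactness: although I have phrased the argument above to avoid taking a limit of strategies, one should double-check that the argument does not secretly require $X$ compact. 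In fact Lemma~\ref{l:mdc} is the ``trivial'' direction (it is Implication~\ref{mdc}, asserted to be trivial in the text), so compactness should \emph{not} be needed — the argument is really just: a single discrete man strategy with $\eps$ small already is (essentially) a continuous winning strategy, and the hypothesis $\delta \not\to 0$ hands us exactly such a strategy for arbitrarily small $\eps$. So the write-up reduces to: (i) pick $\eps$ with $\delta(\eps) > 0$ and $\eps < \delta(\eps)$; (ii) take the man's discrete strategy achieving distance $> \eps$; (iii) convert it to a continuous strategy via the text's correspondence, noting the distance changes by at most $\eps$, hence stays positive; (iv) conclude the man is never caught by time $T$.
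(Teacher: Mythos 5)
Your final distillation (i)--(iv) is precisely the paper's proof: choose $\eps$ with $\delta(\eps)>\eps$, lift the man's discrete strategy to the continuous game, and note the closest distance changes by at most $\eps$ and hence stays positive. The long detour in your middle paragraph (assuming the man has no winning strategy and deriving a contradiction by re-discretizing a capturing lion path) is logically sound but unnecessary, as you yourself conclude. The one genuine slip is the mover convention: you take the man's discrete strategy with the man moving \emph{second}, ``to respect the no-lookahead rule,'' but this is backwards. If the man moves second, his $j$th move may depend on the lion's path up to time $j\eps$, so replaying it in continuous time during $((j-1)\eps, j\eps]$ peeks at the lion's future; it is the discrete strategy with the man moving \emph{first} that lifts to a legal continuous strategy (his $j$th move then depends only on the lion's path up to time $(j-1)\eps$). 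The paper's proof explicitly takes the man to move first, using the earlier remark that switching the first mover changes $\delta$ by at most $\eps$, which is harmless here. With that correction your argument coincides with the paper's, and you are right that compactness of $X$ plays no role in this direction.
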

\begin{proof}
Choose $\eps >0$ such that $\delta(\eps)>\eps$, and let $\MJWPsi$ be a man
strategy for the $\eps$-discrete game (with the man moving first) 
witnessing this.
Then, in the continuous game, the man just follows 
the corresponding strategy. At all times, the 
lion is at most $\eps$ closer to the man than he is in the
discrete game, and so does not catch the 
man.
\end{proof}

\begin{lemma}
Suppose that the lion has a winning strategy for the continuous
game. Then $\delta\to 0$ as $\eps\to 0$.
\end{lemma}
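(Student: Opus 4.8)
The plan is to run the lion's continuous winning strategy inside the discrete game, losing at most $\eps$ in the closest lion--man distance. Let $\MJWPhi$ be a winning lion strategy for the continuous bounded-time game on $X$, so that for every $m\in M$ there is some $t\le T$ with $\MJWPhi(m)(t)=m(t)$; in particular, in any continuous play in which the lion follows $\MJWPhi$, the closest lion--man distance over $[0,T]$ equals $0$. As observed just before the lemma, a strategy for a player in the continuous game induces a strategy for that player in the $\eps$-discrete game provided that player moves \emph{second}, and passing between the discrete and the continuous play changes the closest distance by at most $\eps$; also, which player moves first only alters $\delta(\eps)$ by at most $\eps$, so it suffices to bound $\delta(\eps)$ for the variant of the $\eps$-discrete game in which the man moves first.

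So work in that variant: let the lion use the strategy induced by $\MJWPhi$ and let the man play an arbitrary discrete strategy. The resulting discrete play corresponds to a continuous play in which the lion follows $\MJWPhi$ against some man path $m\in M$, and there the closest distance is $0$; hence in the discrete play the closest distance is at most $\eps$. Thus in this variant the lion has a strategy guaranteeing outcome $d\le\eps$, so $\delta(\eps)\le\eps$ there, and therefore $\delta(\eps)\le 2\eps$ for the game as officially defined. Letting $\eps\to0$ through divisors of $T$ yields $\delta\to0$, as required.

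There is no genuine obstacle here -- this is the easy direction -- but one should take a little care with the bookkeeping between the two discrete variants and the continuous game. The essential point is that the lion can honestly replay $\MJWPhi$ only if, within each round, it moves \emph{after} the man, for otherwise the no-lookahead rule would be violated; this is exactly why the man-moves-first variant is the one to use, and why a harmless extra $\eps$ is incurred when comparing with the official game. All of this is already packaged into the remarks preceding the lemma, so the write-up is short.
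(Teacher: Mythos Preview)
Your proof is correct and follows essentially the same approach as the paper: convert the continuous winning lion strategy into an $\eps$-discrete strategy with the lion moving second, and use the $\eps$-loss remark to bound $\delta(\eps)$. You are in fact slightly more careful than the paper with the bookkeeping about who moves first (obtaining $\delta(\eps)\le 2\eps$ rather than the paper's $\delta(\eps)\le\eps$), but this makes no difference to the conclusion.
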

\begin{proof}
  Let $\MJWPhi$ be a winning strategy for the lion in the continuous
game, and let $\MJWPhi'$ be the corresponding strategy for the lion in the
$\eps$-discrete game (with the lion moving second). In any play of the discrete
game, the lion (following $\MJWPhi'$) must be at distance at most
$\eps$ of the man (because the lion catches the man in the continuous game).
Hence $\delta(\eps)\le \eps$ and the result follows.
\end{proof}

The next lemma assumes that $X$ is compact; this is a natural condition to 
impose for
the whole lion-man game in general, and here it is important because it 
allows us to
take limits of paths. We remark that, since all paths are Lipschitz, the spaces
$L$ and $M$, viewed as metric spaces with the supremum metric, are compact.
This is by a standard Arzel\` a-Ascoli type 
argument (see for example [3, Ch.~6])
-- the fact that all paths are Lipschitz guarantees equicontinuity.

To prove that a winning strategy for the 
lion in the discrete game
lifts to a winning strategy in the continuous game we use the
following lemma.

\begin{lemma}\label{l:aa}
Let $X$ be compact. Suppose that for every $n$ there exists a lion 
strategy
$\MJWPhi_n$ in the continuous-time game such that for every man path 
$m$ we have
$d(\MJWPhi_n(m)(t),m(t))<1/n$ for some $t$. 
Then there exists a winning lion strategy in the
continuous-time game.
\end{lemma}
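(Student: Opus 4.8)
The plan is to fuse the strategies $G_n$ into a single strategy by an \emph{ultralimit} construction. Fix once and for all a non-principal ultrafilter $\mathcal{U}$ on $\mathbb{N}$; this is the one point at which the Axiom of Choice is used. Given a man path $m$ and a time $s\in[0,T]$, the points $G_n(m)(s)$ all lie in the compact space $X$, so the ultralimit $G(m)(s):=\lim_{\mathcal{U}}G_n(m)(s)\in X$ is well defined, and this defines a map $G\colon M\to L$. The claim is that $G$ is a winning lion strategy.

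First I would check that $G$ really is a lion strategy. Each $G_n(m)$ is $1$-Lipschitz, so $d(G_n(m)(s),G_n(m)(s'))\le|s-s'|$ for every $n$; taking the ultralimit and using that ultralimits commute with the continuous function $d$ on the compact space $X$ gives $d(G(m)(s),G(m)(s'))\le|s-s'|$, so $G(m)$ is a $1$-Lipschitz path, and $G(m)(0)=\lim_{\mathcal{U}}G_n(m)(0)=x_0$. For the no-lookahead property: if $m,m'$ agree on $[0,t]$ then each $G_n$ forces $G_n(m)$ and $G_n(m')$ to agree on $[0,t]$, so for every $s\le t$ the sequences $(G_n(m)(s))_n$ and $(G_n(m')(s))_n$ coincide, hence so do their ultralimits; thus $G(m)$ and $G(m')$ agree on $[0,t]$.

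Next I would show that $G$ catches every man. Fix $m$, and set $f_n(s)=d(G_n(m)(s),m(s))$ and $f(s)=d(G(m)(s),m(s))$; since ultralimits commute with $d$, we have $f(s)=\lim_{\mathcal{U}}f_n(s)$ for each $s$. By hypothesis there is, for each $n$, a time $t_n\in[0,T]$ with $f_n(t_n)<1/n$. As $[0,T]$ is compact, $t^*:=\lim_{\mathcal{U}}t_n\in[0,T]$ exists. Each $f_n$ is $2$-Lipschitz (both $G_n(m)$ and $m$ are $1$-Lipschitz), so $f_n(t^*)\le f_n(t_n)+2|t^*-t_n|<1/n+2|t^*-t_n|$; taking the ultralimit and using $\lim_{\mathcal{U}}1/n=0$ and $\lim_{\mathcal{U}}|t^*-t_n|=0$ gives $f(t^*)=\lim_{\mathcal{U}}f_n(t^*)=0$, i.e.\ $G(m)(t^*)=m(t^*)$. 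Hence $G$ is winning.

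The one genuinely delicate point, and the reason the argument is not trivial, is the interplay between no-lookahead and the demand for \emph{exact} capture. A more naive approach — for each $m$, pass to a subsequence of $(G_n(m))_n$ converging in the (compact, by the Arzel\`a--Ascoli remark) space $L$ together with a convergent subsequence of the near-capture times, then invoke continuity of $d$ to get exact capture in the limit — does produce, for each individual $m$ separately, a lion path catching $m$; but the subsequence depends on the whole of $m$, so the resulting map violates no-lookahead. Fixing a single ultrafilter makes the limiting procedure simultaneously uniform in $m$ and prefix-consistent, which is exactly what is needed; and note that a \emph{principal} ultrafilter would merely reproduce one of the $G_n$, so non-principality (hence the appeal to Choice) is essential for capture. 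I do not anticipate difficulty in any other step.
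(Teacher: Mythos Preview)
Your argument is correct. It is, however, not the route the paper takes as its main proof: the authors use Zorn's Lemma on the poset of \emph{good partial strategies} (partial functions $G\colon M'\subset M\to L$ satisfying no-lookahead and such that each $G(m)$ is the uniform limit of some subsequence of the $G_n(m)$), arguing that a maximal such object must be totally defined. Interestingly, the paper explicitly remarks afterwards that the lemma ``may also be proved using limits along a (non-principal) ultrafilter'', which is exactly what you have done.

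The two approaches differ mainly in bookkeeping. In the Zorn argument one must handle, for a new path $m$, the delicate case-split on whether the supremum $t_0=\sup\{t:\exists m'\in M',\ m|_{[0,t)}=m'|_{[0,t)}\}$ is attained, threading the chosen subsequence through a possibly transfinite history of earlier choices. Your ultrafilter construction sidesteps all of this: a single global choice of $\mathcal{U}$ makes the limit simultaneously well defined, pointwise Lipschitz, and prefix-consistent, and your $2$-Lipschitz estimate on the $f_n$ cleanly transports the near-capture times to an exact capture time in the limit. The cost is the same in both cases (an appeal to Choice), and the paper's closing question --- whether any such appeal is genuinely necessary --- applies equally to your version.
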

\begin{proof}
It is tempting to argue as follows. For any man path $m$, the compactness of
$L$ ensures that there exists a subsequence of the paths $\MJWPhi_n(m)$
that converges uniformly to some path $l$. Define $\MJWPhi(m)=l$, noting
that since for each $n$ there is a $t$ with $\MJWPhi_n(m)(t)$ within 
distance $1/n$ of $m(t)$ it follows by the uniformity of the convergence
(and the fact that the interval $[0,T]$ is compact) that we have
$\MJWPhi(m)(t) = m(t)$ for some $t$.
However, this may not yield a valid strategy: there is no reason why
$\MJWPhi$ should satisfy the no-lookahead rule.

Instead of this, we build up $\MJWPhi$ one path at a time -- or,
to put it another way, we use Zorn's Lemma to construct $\MJWPhi$.
Let a {\it partial strategy} be a
function $\MJWPhi$ from a subset of $M$ to $L$ that satisfies `no lookahead'
where it is defined -- in other words, if $\MJWPhi$ is defined at
$m,m' \in M$, and $m$ and $m'$ agree on 
$[0,t]$, then also $\MJWPhi(m)$ and $\MJWPhi(m')$ agree on $[0,t]$. We say that
a partial strategy $\MJWPhi$ is {\it good} if
for each $m$ for
which $\MJWPhi(m)$ is defined there 
is a subsequence of the paths $\MJWPhi_n(m)$
that converges uniformly to $\MJWPhi(m)$. 
Given two good partial 
strategies $\MJWPhi_1$ and $\MJWPhi_2$ with domains $M_1$ and $M_2$
respectively, we say $\MJWPhi_1 \leq \MJWPhi_2$ if $M_1\subset M_2$ and
$\MJWPhi_2|M_1=\MJWPhi_1$. 

It is obvious that every chain of good partial strategies 
has an upper bound, namely their union. Hence by Zorn's Lemma there is
a maximal good partial strategy $\MJWPhi$, say with domain $M'$. We will show 
that 
this is a full strategy, i.e. that $M'=M$.

Indeed, suppose not. Fix $m\not \in M'$. We aim to extend $\MJWPhi$ to a
good partial strategy $\MJWPhi'$ on $M'\cup \{m\}$. Set $t_0$ be 
\[ t_0 = 
\sup \{t:\exists
  m'\in M',\ \forall s<t,\ m(s)=m'(s)\}.
\]

There are now two cases, according to whether or not this supremum
is attained. If it is attained, we have a path $m'$ in the domain of
$\MJWPhi$ agreeing with
$m$ on $[0,t]$. Then a certain subsequence of the $\MJWPhi_n(m')$, 
say the $\MJWPhi_{n_i}(m')$, converges to
$\MJWPhi(m')$. We may now choose a convergent subsequence of the 
$\MJWPhi_{n_i}(m)$,
and let $\MJWPhi'(m)$ be the limit of that sequence.

On the other hand, if the supremum is not attained then we have a sequence
$t_1,t_2,\ldots$ tending up to $t_0$, and paths $m_1,m_2,\ldots$ in the domain
of $\MJWPhi$, such that $m$ and $m_i$ agree on $[0,t_i]$. For each $i$, we may
choose $n_i$ such that $\MJWPhi_{n_i}(m_i)$ is within distance $1/i$ of
$\MJWPhi(m_i)$ (and say $n_1<n_2<\ldots$). We now choose a convergent
subsequence of the $\MJWPhi_{n_i}(m)$,
and let $\MJWPhi'(m)$ be the limit of that sequence.

It is easy to check that $G'$ is indeed a good partial strategy.
\end{proof}

We remark that Lemma~\ref{l:aa} may also be proved using limits along a 
(non-principal) ultrafilter.
But it would be interesting to know if the appeal to Zorn's Lemma (or
similar) is really necessary.
 
By taking as the $G_n$ the strategies corresponding to the lion strategies in
the discrete game (with the lion moving first), we 
immediately have the following corollary.

\begin{corollary}\label{c:ldc} Suppose that in the $\eps$-discrete 
game we have $\delta(\eps)\to0$ 
as $\eps\to 0$. Then the lion has a winning strategy for the 
continuous-time game. \hfill\qed
\end{corollary}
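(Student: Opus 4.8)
The plan is to read the corollary off from Lemma~\ref{l:aa}. That lemma requires a family of continuous lion strategies $\MJWPhi_n$ such that, against every man path $m$, there is a time $t$ with $d(\MJWPhi_n(m)(t),m(t))<1/n$; so the task is to build such strategies from good lion play in the discrete game.

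First I would fix $n$. Since $\delta(\eps)\to0$ as $\eps\to0$ (through values dividing $T$), I can choose such an $\eps=\eps_n$ with both $\delta(\eps_n)<1/(2n)$ and $\eps_n<1/(2n)$. By the defining property of $\delta$, taking $\delta'=1/(2n)>\delta(\eps_n)$, the lion has a strategy in the $\eps_n$-discrete game (lion moving first) guaranteeing that the closest lion--man distance in any play is less than $1/(2n)$. Now I invoke the observation recorded above: a strategy for the lion in the $\eps$-discrete game with the lion moving first lifts to a strategy for the lion in the continuous game, and moving between the two versions changes the closest distance in any play by at most $\eps$. Let $\MJWPhi_n$ be the continuous strategy obtained in this way. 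Then, against any continuous man path $m$, the closest lion--man distance is at most $1/(2n)+\eps_n<1/n$, so in particular $d(\MJWPhi_n(m)(t),m(t))<1/n$ for some $t\in[0,T]$.

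Feeding the family $(\MJWPhi_n)_{n\ge1}$ into Lemma~\ref{l:aa} now produces a winning lion strategy for the continuous game, which is exactly the conclusion. I do not expect any real obstacle here: the substantive work — the Zorn's Lemma / compactness argument needed to force the limiting strategy to obey no-lookahead — is already contained in Lemma~\ref{l:aa}, and all that remains is the bookkeeping, namely choosing $\eps_n$ among the divisors of $T$ and using the factor-of-two split to absorb the $\eps_n$ discrepancy between the discrete and continuous versions of the game.
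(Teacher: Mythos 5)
Your proposal is correct and follows exactly the paper's route: the paper proves this corollary by taking as the $\MJWPhi_n$ the continuous-game strategies corresponding to the discrete-game lion strategies (lion moving first) and feeding them into Lemma~\ref{l:aa}, using the previously noted fact that the passage from discrete to continuous changes the closest distance by at most $\eps$. Your version merely makes explicit the bookkeeping (the choice of $\eps_n$ and the $1/(2n)+\eps_n<1/n$ estimate) that the paper leaves to the reader.
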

Combining Lemma~\ref{l:mdc} and Corollary~\ref{c:ldc} we have:
\begin{theorem}
  In the bounded-time game played in a compact metric space, 
 at least one of the lion and man has a
  winning strategy.\hfill\qed
\end{theorem}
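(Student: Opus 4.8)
The plan is to split into two cases according to the behaviour of the function $\delta(\eps)$ as $\eps \to 0$ (through values that divide $T$), and then simply invoke the two results already in hand. By the definition of a limit, exactly one of the following holds: either $\delta(\eps) \to 0$ as $\eps \to 0$, or it does not. I would treat these two cases separately, and in each show that one of the players has a winning strategy for the continuous-time game.

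In the first case, $\delta(\eps) \to 0$, and then Corollary~\ref{c:ldc} applies verbatim: the lion has a winning strategy in the continuous-time game. In the second case, $\delta(\eps) \not\to 0$, so (since the $\delta(\eps)$ are nonnegative) there is a constant $c > 0$ and a sequence $\eps_k \to 0$ with $\delta(\eps_k) > c$; since $\eps_k \to 0$ we have $\delta(\eps_k) > \eps_k$ for all large $k$, which is precisely the hypothesis of Lemma~\ref{l:mdc}. Hence the man has a winning strategy in the continuous game. In either case at least one player wins, which is the assertion.

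There is essentially no obstacle left at this point: all the substance — the Arzel\`a--Ascoli compactness of the path spaces and the appeal to Zorn's Lemma (or an ultrafilter) needed to upgrade a sequence of ``almost-catching'' strategies into a genuine winning strategy — has already been carried out in Lemma~\ref{l:aa} and packaged as Corollary~\ref{c:ldc}. The only thing deserving a moment's care is the elementary step that $\delta(\eps)\not\to 0$ forces $\delta(\eps) > \eps$ along a subsequence, so that Lemma~\ref{l:mdc} genuinely applies; but this is immediate, as $\eps$ tends to $0$ while $\delta(\eps)$ stays bounded away from $0$.
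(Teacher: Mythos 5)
Your proof is correct and is essentially identical to the paper's: the theorem is obtained by combining Lemma~\ref{l:mdc} and Corollary~\ref{c:ldc} via the dichotomy on whether $\delta(\eps)\to 0$. Your extra remark that $\delta(\eps)\not\to 0$ yields $\delta(\eps)>\eps$ along a subsequence is a fair observation, but note that Lemma~\ref{l:mdc} is stated with exactly the hypothesis $\delta\not\to 0$, so it applies directly.
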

The condition that the game be played for bounded time seems crucial for the
above argument: we
do not know what happens if time is unbounded. We remark that there
are simple spaces showing that the man being able to escape for an 
arbitrarily long time does not mean that the man can escape forever. Indeed, 
consider a space consisting of paths from a point of path-lengths
$1,2,3,\ldots$, with the man starting at the common point. Then the man has
a winning strategy for the bounded-time game (for any $T$), but not for the
unbounded-time game. Note that this space can easily be made compact, by
`rolling up' the paths. 

We also do not know what happens if the metric space $X$ is not compact.
We suspect that it can happen that (even for the bounded-time game)
neither player has a winning strategy, but we have been unable to show this.
However, in Section 4 we will show that if one allows two lions (acting as a
team) to pursue a man then it can indeed happen that neither side has a 
winning strategy.

We now turn to the last of our four implications. If the man has a winning 
strategy that is
continuous (as a function from $L$ to $M$) then the result is immediate.

\begin{lemma}
Let $X$ be a compact metric space. Suppose that the man has a 
continuous strategy that is winning for the
bounded-time game on $X$. Then $\delta\not\to0$ as $\eps\to0$.
\end{lemma}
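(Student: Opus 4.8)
The plan is to turn the man's \emph{continuous} winning strategy into a \emph{uniform} lower bound on the lion--man distance over all lion paths, using compactness of the path space $L$, and then to carry this bound over to the discrete game at a cost of only $\eps$.

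Fix a continuous winning man strategy $\MJWPsi\colon L\to M$ and define $g\colon L\to[0,\infty)$ by
\[
  g(l)=\min_{t\in[0,T]} d\bigl(l(t),\MJWPsi(l)(t)\bigr),
\]
the minimum being attained because $t\mapsto d(l(t),\MJWPsi(l)(t))$ is continuous on the compact interval $[0,T]$. Since $\MJWPsi$ is winning, $\MJWPsi(l)(t)\ne l(t)$ for every $l\in L$ and every $t\in[0,T]$, so $g(l)>0$ for all $l$. Next I would check that $g$ is continuous: applying the inequality $|d(a,b)-d(a',b')|\le d(a,a')+d(b,b')$ pointwise and then taking a supremum over $t$ gives
\[
  |g(l)-g(l')|\le \sup_{s\in[0,T]} d\bigl(l(s),l'(s)\bigr)+\sup_{s\in[0,T]} d\bigl(\MJWPsi(l)(s),\MJWPsi(l')(s)\bigr),
\]
and the right-hand side tends to $0$ as $l'\to l$ in the supremum metric, because $\MJWPsi$ is continuous. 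As $L$ is compact (by the Arzel\`a--Ascoli argument noted above), $g$ attains a minimum $\delta_0:=\min_{l\in L} g(l)>0$. Thus, whatever path the lion takes, if the man plays $\MJWPsi$ then the lion stays at distance at least $\delta_0$ from the man at every time in $[0,T]$.

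Finally I would pass to the $\eps$-discrete game, with the lion moving first so that the man moves second. The man follows the strategy induced by $\MJWPsi$; as recorded earlier in this section, this induced rule is a legitimate discrete strategy (the no-lookahead rule is not violated precisely because the man moves second), and any play under it differs from a continuous play in which the man follows $\MJWPsi$ by at most $\eps$ in the closest lion--man distance. Hence in the discrete game the man can guarantee a closest distance of at least $\delta_0-\eps$, which by definition of $\delta$ gives $\delta(\eps)\ge\delta_0-\eps$. In particular $\delta(\eps)>\delta_0/2$ once $\eps<\delta_0/2$, so $\delta\not\to 0$ as $\eps\to 0$.

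The heart of the matter is that compactness of $L$ promotes the pointwise inequality $g(l)>0$ — all that the winning condition directly supplies — to the uniform bound $\inf_{l\in L} g(l)>0$, and this is exactly where continuity of the man's strategy enters: without it there need be no single $\MJWPsi$ making $g$ a well-defined continuous function of $l$, which fits with Implication~\ref{mcd} being false in general. I expect the only mildly fiddly point to be the discrete-to-continuous conversion for the man and the resulting ``$\eps$ of slack'', but this is precisely the correspondence stated earlier and so may simply be invoked.
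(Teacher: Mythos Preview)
Your proof is correct and follows essentially the same route as the paper's: define the closest-approach function $g(l)=\min_{t}d(l(t),\MJWPsi(l)(t))$, use continuity of $\MJWPsi$ and compactness of $L$ to obtain a uniform lower bound $\delta_0>0$, and then convert to the discrete game at the cost of $\eps$. You have simply supplied more detail (the explicit Lipschitz-type estimate for $g$) than the paper, which asserts continuity of $g$ directly.
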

\begin{proof}
Let $\MJWPsi$ be the continuous winning strategy.  The function mapping a
lion path $l$ to $\inf_{t\in[0,T]}d(l(t),\MJWPsi(l)(t))$ (in other words,  
the closest the lion ever gets to the man when the
man plays this strategy), is
continuous as a function of $l$. As $L$ is compact, 
there is a path minimising this distance. But $\MJWPsi$ is a winning strategy, 
and so this minimal distance must be strictly greater than zero -- say it
is $c>0$.
Hence if the man plays the corresponding strategy in the $\eps$-discrete 
game (with the man moving second) then for 
any $\eps<c$ we have $\delta(\eps)\ge c-\eps$, so that $\delta$ does 
not tend to
zero.
\end{proof}

Now, do strategies tend to be continuous? As defined earlier, the
Besicovitch strategy for the game in the closed disc is {\it not} continuous.
This is for a reason that one feels ought to be easy to get round: that 
if the lion is on the same radius as the man
then the man makes an arbitrary choice of which way to run.  
But in fact there is {\it no} continuous winning strategy for the man in 
this game. 
\begin{theorem}
In the lion and man game in the closed unit disc the man does not have
a continuous winning strategy. Indeed, for any continuous man strategy
there is a lion path catching the man by time $1$.
\end{theorem}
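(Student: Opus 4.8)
The plan is a topological fixed-point argument: for an \emph{arbitrary} continuous man strategy $\MJWPsi\colon M\to M$ — I mean a continuous strategy $\MJWPsi\colon L\to M$ — I will produce a single lion path that defeats it. Throughout, take the lion to start at the centre $O$ of the disc (the standard set-up for this problem; the point is that every point of $D$ lies within distance $1$ of $O$, which is exactly what pins the time bound down to $1$).

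The first step is to index a suitable family of lion paths by the disc itself. For $p\in D$ let $\ell_p$ be the path that runs at full speed from $O$ straight towards $p$ and then halts there; thus $\ell_p(t)=\min(t,|p|)\,p/|p|$ for $p\neq O$, and $\ell_p\equiv O$ when $p=O$. Each $\ell_p$ is $1$-Lipschitz with $\ell_p(0)=O$, so $\ell_p\in L$, and, crucially, $\ell_p(1)=p$ for every $p$, since $|p|\le 1$. A one-line estimate — for $p,q$ close, $\|\ell_p-\ell_q\|_\infty$ is small, the only case needing a word being $q\to O$, where $\|\ell_q-\ell_O\|_\infty=|q|$ — shows that $p\mapsto \ell_p$ is a continuous map from $D$ into $L$ in the supremum metric.

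Now define $\phi\colon D\to D$ by $\phi(p)=\MJWPsi(\ell_p)(1)$, the position of the man at time $1$ when he plays $\MJWPsi$ against the lion path $\ell_p$. This lands in $D$ because man paths take values in $D$, and it is continuous: $p\mapsto \ell_p$ is continuous by the previous step, $\MJWPsi$ is continuous by hypothesis, and $m\mapsto m(1)$ is continuous on $M$ (since $d(m(1),m'(1))\le \|m-m'\|_\infty$). By Brouwer's fixed-point theorem there is a point $p^\ast\in D$ with $\MJWPsi(\ell_{p^\ast})(1)=\phi(p^\ast)=p^\ast$. But also $\ell_{p^\ast}(1)=p^\ast$, so against the lion path $\ell_{p^\ast}$ the man following $\MJWPsi$ is caught at time $1$. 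Hence $\MJWPsi$ is not a winning strategy for the man, and the stronger ``Indeed'' form is immediate: $\ell_{p^\ast}$ is the required lion path.

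I do not expect a real obstacle. Essentially all the content lies in choosing the parameter space — in seeing that the ``aim at a fixed point'' lion paths are naturally indexed by $D$, so that the man's time-$1$ position becomes a continuous self-map of $D$, after which Brouwer closes the argument. The remaining points are routine bookkeeping: the continuity of $p\mapsto \ell_p$ near $O$, and the observation that there is no circularity here — we feed one explicit, honest lion path into the man's strategy and read off his (unique) response, rather than attempting to play two strategies against one another. One should also bear in mind that ``caught by time $1$'' is meant inclusively, so capture at time exactly $1$ is all that is needed.
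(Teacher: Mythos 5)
Your proposal is correct and is essentially the paper's own argument: index lion paths from the centre by their time-$1$ endpoint $p\in D$, apply Brouwer to $p\mapsto \MJWPsi(\ell_p)(1)$, and note the fixed point is a capture at time $1$. The only (immaterial) difference is that the paper uses the constant-speed path reaching $p$ at time $1$ rather than your full-speed-then-halt path.
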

\begin{proof}
Suppose that $\MJWPsi$ is a continuous man strategy.  For each point $z$
in the unit disc define the lion path $l_z$ to be the constant speed
path from the origin to $z$ reaching $z$ at time 1. Then $z\mapsto
\MJWPsi(l_z)(1)$ is a continuous function of $z$ from the disc to the
disc. Hence by the Brouwer fixed point theorem there is some $z$ with
$\MJWPsi(l_z)(1)=z=l_z(1)$. In other words, the man is caught by the lion.
\end{proof}

One might still feel that the problems here are merely technical,
arising as they do out of the arbitrary choice when the lion is on the
same radius as the man. It would be natural to imagine that this can be got 
round by allowing multivalued strategies (thus each lion path would map to
a set of man paths) -- so if the lion was on the same radius as the man then
we would have two paths extending our path so far, and so on. One would then 
hope to prove that such a set-valued function may always be chosen to be
upper semi-continuous (or have some related property), which would then
allow a proof as in the lemma above.

But, contrary to what the authors of this paper had thought for some time, 
this cannot
be made to work, and indeed the whole theorem that exactly one player has a
winning strategy is false -- even in compact spaces. This is the content
of the next section.

\section{Some Examples}\label{s:2wins}

We start by giving an example of a compact metric space in which, for the
unbounded-time game (and also for the bounded-time game), 
both players have winning strategies. As we shall see,
the strategy for the lion is very simple. The strategy for the man, on the
other hand, will 
rely on a curious device of `getting out from underneath the lion'.

If $X$ and $Y$ are metric spaces then the {\it $l_\infty$ sum} 
of $X$ and $Y$ is
the metric space on $X \times Y$ 
in which the distance from $(x,y)$ to $(x',y')$ is
$\max (d(x,x'),d(y,y'))$. 

\begin{theorem}
Let $X$ be the $l_\infty$ sum of the closed unit disc $D$ and $[0,1]$. 
Then, in the lion and
man game on $X$ with the man starting at
$(0,0)$ and the lion starting at $(0,1)$, both players have
winning strategies.
\end{theorem}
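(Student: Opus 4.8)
The plan is to exhibit explicit winning strategies for both players. Write a point of $X$ as $(p,h)$ with $p \in D$ (the "disc coordinate") and $h \in [0,1]$ (the "height coordinate"); recall that a path in $X$ is $1$-Lipschitz iff both coordinate paths are $1$-Lipschitz, and the lion catches the man iff the disc coordinates coincide \emph{and} the height coordinates coincide at the same instant. The lion starts at $(0,1)$ (height $1$) and the man at $(0,0)$ (height $0$).

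\textbf{The lion's winning strategy.} I would have the lion ignore the disc coordinate entirely for a short while and simply decrease its height at full speed, reaching height $0$ at time $1$; simultaneously it runs the "stay on the same radius" strategy in the disc coordinate, using the time it has. Actually the clean version: the lion plays, in the disc coordinate, the classical radius strategy (which catches a man confined to $D$ by time, say, $\pi$ if we allow a little slack, or we rescale), and in the height coordinate it moves at whatever speed is left over to match the man's height. The point is that since the metric is the \emph{max}, the lion only needs speed $1$ to control whichever coordinate currently disagrees more; one checks that the lion can drive the disc-coordinate gap to $0$ in bounded time while keeping the height coordinate equal to the man's, because matching heights costs the lion no "extra" speed beyond what the radius strategy already leaves unused. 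I expect the bookkeeping here to be short: the radius strategy in $D$ uses the lion's full speed in the disc direction only intermittently, and when it does, the height coordinates are already forced to be close because the man cannot have changed his height fast without the lion copying it. This yields capture by a fixed time $T$.

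\textbf{The man's winning strategy.} This is the "getting out from underneath the lion" device. The man will survive forever by running a Besicovitch-type strategy in the disc coordinate, but with a twist using the height coordinate to dodge the one bad moment. As long as the lion's height differs from the man's, the man is safe no matter what the disc coordinates do, so the man only needs to worry about times when the heights agree. The man runs Besicovitch in $D$ in time-steps; at the start of each step, if the lion is currently at the man's height, the man first spends a tiny sub-interval changing his own height (moving it away from the lion's, towards the middle of $[0,1]$ if possible) so that for the rest of that step the heights differ and the disc motion is irrelevant to safety — then at the next step he re-assesses. The key quantitative point is that the man need only ever move his height by small amounts, and the Besicovitch analysis ($r_{i+1}^2 = r_i^2 + t_i^2$ with $\sum t_i = \infty$, $\sum t_i^2 < \infty$) guarantees he stays in the arena and is never caught, while the height maneuvers are interleaved without disturbing this.

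\textbf{Main obstacle.} The delicate part is the man's strategy: I must arrange that the man is \emph{never} simultaneously matched in both coordinates, and the worry is a limit point of times at which the lion keeps forcing the height issue, or the man being pinned against $h=0$ or $h=1$. The resolution should be that the man, starting in the interior in the height coordinate (or moving there once), always has room to step his height to either side, and the "no lookahead" rule means the lion cannot anticipate which way; combined with the fact that mismatched heights give total safety for a whole time-step, this lets the man always escape the current step. Verifying that these two interleaved processes (Besicovitch in $D$, dodging in $[0,1]$) compose into a single valid $1$-Lipschitz man strategy satisfying no-lookahead — and that it genuinely defeats \emph{every} lion path — is the crux; everything else is routine.
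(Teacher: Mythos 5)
Both halves of your proposal have genuine problems, and in both cases the difficulty is resolved in the paper by exploiting the coordinate that you treat as secondary.

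For the lion: your ``clean version'' has the lion play the radius strategy in the disc coordinate and claims this drives the disc-coordinate gap to $0$ in bounded time. It does not -- Besicovitch's theorem (the starting point of the whole paper) says precisely that no lion strategy, the radius strategy included, catches a man in the closed disc. So a lion who first equalises heights and then tries to win in the disc coordinate is just playing the original disc game from a positive separation, which he loses. You have the roles of the two coordinates reversed. The disc coordinates \emph{already agree} at the start (both players have disc coordinate $0$), and in the $l_\infty$ metric a path is $1$-Lipschitz iff each coordinate path is, so there is no ``leftover speed'' bookkeeping: the lion can copy the man's disc coordinate exactly \emph{and simultaneously} run at full speed~$1$ in the interval coordinate. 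Since the man's height is trapped in $[0,1]$, the heights coincide by time $1$, at which instant the disc coordinates still coincide, and the man is caught. That is the paper's entire lion argument.

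For the man: dodging in the \emph{height} coordinate cannot be the engine of the escape, because height equality is unavoidable -- against the lion path with height $\max(1-t,0)$ the continuous function (lion height)$-$(man height) goes from $+1$ to a nonpositive value by time $1$, so the heights agree at some $t^*\le 1$ no matter what the man does. Hence the man's safety must ultimately rest on the \emph{disc} coordinates being distinct, and your proposal never makes them distinct: Besicovitch run from coincident disc positions does nothing against a lion path whose disc coordinate simply duplicates the man's, and the ``crux'' you flag (interleaving the two dodges so that at every instant at least one coordinate differs) is exactly the part that does not go through. The paper's resolution is to apply the germ-based dodge \emph{once, in the disc coordinate}: for time $1/2$ the man runs to disc position $(1/2,0)$ unless the lion's disc coordinate tracks that full-speed ray exactly on some initial interval, in which case he runs to $(-1/2,0)$; the $1$-Lipschitz constraint then forces the two disc coordinates to differ at time $1/2$ (and throughout), this decision depends only on arbitrarily early lion behaviour so no-lookahead holds, and from time $1/2$ onwards plain Besicovitch in the disc coordinate keeps the disc coordinates distinct forever, making the height coordinate entirely irrelevant. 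You should restructure the man's strategy around separating the disc coordinates once, rather than repeatedly separating the heights.
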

\begin{proof}
The lion has an obvious winning strategy: keep the `disc' coordinate
the same as the man and run towards him in the interval coordinate. Thus the
lion catches the man in time at most 1.

For a winning strategy for the man, the first aim is to `escape from
underneath the lion'. For time $1/2$ (say), the man acts as follows.
If there exists a positive time $t$ such that the lion's disc
coordinate was exactly $s$ for all $0 \leq s \leq t$ then the man
runs straight to $(-1/2,0)$, while if there is no such positive $t$
then the man runs straight to $(1/2,0)$. Note that this satisfies the
no-lookahead rule, because for any given time $t \leq 1/2$ the man's
position at time $t$ is determined by the lion's position at arbitrarily
early times.

At time $1/2$, the man now has a different disc coordinate to the lion.
He then plays the Besicovitch strategy in the disc (and does whatever he
likes in the other coordinate).
\end{proof}

The above example does not embed isometrically into Euclidean space, because 
the sum is taken as an $l_\infty$ sum -- and 
moreover the $l_\infty$ nature of the sum was crucial to the lion having a
winning strategy. It would be very interesting to know if such a
construction exists in Euclidean space.

In the above example, the start positions were very important. Indeed,
if the lion and man did not start with the same disc coordinate then
the lion does not have 
a winning strategy -- this is as for the
earlier discussion of the Besicovitch strategy. However, we now 
show that by adapting
the above ideas, and getting the man to use the `escape from underneath the
lion' idea not just once but repeatedly, there is a compact metric space in 
which, for any (distinct) starting positions, both players have
winning strategies.
\begin{theorem}
  Let $X$ be the closed unit ball in
  $l_\infty^2$ (the $l_\infty$ sum of $[-1,1]$ with itself). Then both players 
have winning strategies for the lion and man game on $X$, 
for any distinct starting 
positions.
\end{theorem}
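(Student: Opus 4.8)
The plan is to prove the two halves separately: the lion's winning strategy is short and explicit, while essentially all the work is in the man's, which is a \emph{repeated} version of the `escape from underneath the lion' device foreshadowed above. (That both players can win is consistent, since, as explained in the introduction, there is no reason for the two strategies to have a common fixed point.)

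\emph{The lion.} I would give the lion this strategy: in each of the two coordinates separately, run at full speed in that coordinate straight towards the man's current coordinate until the two coincide, and copy it thereafter. This is $1$-Lipschitz in each coordinate, hence of $l_\infty$-speed at most $1$, and it is causal. Since the man's $i$th coordinate stays in $[-1,1]$ while the lion's $i$th coordinate moves at unit speed towards it, the two meet by time $2$ (if $\ell_i(t)<m_i(t)$ then $\ell_i$ rises at unit speed and must overtake $m_i$ before it could leave the interval) and agree forever after; so the full positions agree by time $2$, and the lion wins (in particular the bounded-time game for any $T\ge 2$).

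\emph{The man.} For the man I would keep, at each moment, a choice of `defended side' $\sigma_i\in\{-1,+1\}$ in each coordinate $i$ and a tolerance $r_k$ taken from a strictly decreasing null sequence $r_1>r_2>\cdots>0$ of suitably small positive reals, and set $m_i=l_i+\sigma_i r_k$ when this lies in $[-1,1]$ and $m_i=\sigma_i$ (riding the wall) otherwise. As long as the lion's position avoids the `bad corner' $(\sigma_1,\sigma_2)$, both margins $|m_i-l_i|$ are positive, so the man is uncaught; and he can keep to this configuration, since in each coordinate he is either matching the lion's sub-unit speed or resting at a wall. The only danger is the lion's driving its position onto the bad corner, and the man forestalls this by an \emph{escape from underneath the lion}: when the lion first comes within $l_\infty$-distance $2r_k$ of the bad corner, the man flips the defended side in \emph{one} coordinate and replaces $r_k$ by $r_{k+1}$, which moves the bad corner to an adjacent one. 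Carrying out the flip forces the man's position in that coordinate to pass through the lion's exactly once, and that single instant is survived because at it the \emph{other} coordinate still has a positive margin.

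\emph{The clearance argument --- the main obstacle.} Everything rests on showing the flip can always be completed safely, and this is the delicate point. At the instant the lion first reaches $l_\infty$-distance $2r_k$ from the bad corner, $\max_i|l_i-\sigma_i|=2r_k$, so one coordinate --- say coordinate $j$ --- has $|l_j-\sigma_j|=2r_k$ and the other, coordinate $i$, has $|l_i-\sigma_i|\le 2r_k$; the man flips coordinate $i$, the one in which the lion is the \emph{nearer} to its wall, by moving $m_i$ monotonically off the $\sigma_i$ side to the point $l_i-\sigma_i r_{k+1}$ on the far side, a displacement of at most $r_k+r_{k+1}$. Hence, so long as the lion keeps pressing towards the corner in coordinate $i$, the flip is over within time $2r_k$; during it the lion can shift $l_j$ by less than $2r_k$ towards $\sigma_j$, but $l_j$ began a full $2r_k$ away from $\sigma_j$, so coordinate $j$ keeps a margin at least $r_k-r_{k+1}>0$ throughout, in particular at the single crossing instant. (Should the lion instead retreat in coordinate $i$ while driving $l_j$ onto its wall, the man tracks the lion in coordinate $i$ with a small positive margin and performs the escape in coordinate $j$ instead, crossing $l_j$ when coordinate $i$ is safe; the freedom to choose \emph{when} to cross --- hence the non-continuity of the strategy, which the causality rule legitimizes exactly as for the Besicovitch strategy and the first example above --- is what makes this work.) After the escape the bad corner is an adjacent one, at $l_\infty$-distance at least $2-2r_k$ from the lion, so the lion needs a fixed positive amount of time --- bounded away from $0$ once $r_1$ is small --- to drive a coordinate across the box and threaten it again; so escapes are separated in time, only finitely many fall in any bounded interval, and the man's path is defined, $1$-Lipschitz and causal for all time. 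The only further thing to check is the start: from arbitrary distinct positions the man can move into a configuration of the above kind without either margin vanishing, using that at time $0$ some coordinate has a positive margin and that in the $l_\infty$ metric both coordinates can be adjusted simultaneously. The genuine obstacle is precisely the clearance estimate: flipping the coordinate in which the lion is the nearer to its wall leaves enough room, both in time and in the other coordinate's margin, to cross safely.
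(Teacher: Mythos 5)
Your lion strategy is the same as the paper's and is fine. The gap is in the man's strategy, and it is not a technicality: the strategy you describe can be beaten by an explicit lion path. First note that your claim ``as long as the lion avoids the bad corner, both margins are positive'' is wrong: the margin in coordinate $i$ is $\min(r_k,|l_i-\sigma_i|)$, so it is held at \emph{zero} for as long as the lion sits on the wall $l_i=\sigma_i$; the man merely survives because the \emph{other} margin is positive. This is exactly the opening the lion exploits. Take the defended corner to be $(1,1)$ with tolerance $r$. The lion first goes to the wall $\{x_2=1\}$ far from $(1,1)$ (margin~2 is now $0$ and stays $0$), then slides along that wall towards $(1,1)$. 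When your flip triggers, the coordinate ``nearer to its wall'' is coordinate~2, at distance $0$, so your recipe has the man descend from $m_2=1$ past $l_2=1$; but in $l_\infty$ the lion can spend full unit speed in \emph{both} coordinates at once, so he can descend in coordinate~2 in exact step with the man's (forced, one-directional) descent, keeping margin~2 identically $0$, while simultaneously driving $l_1\to 1$ at unit speed, which clips $m_1$ to $1$ and kills margin~1: the man is caught. Flipping coordinate~1 instead fares no better, since crossing $l_1$ is only safe at an instant when margin~2 is positive, and the lion simply camps on $l_2=1$ so that no such instant ever occurs while $l_1\to 1$ squeezes the man. Your clearance estimate (``coordinate $j$ started $2r_k$ from its wall and can lose at most $2r_k$'') silently assumes the lion approaches the corner through the interior; it fails when he approaches along a wall, where the other coordinate starts at distance $0$, not $2r_k$. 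More fundamentally, a man whose position is a real-time function of the lion's current position is vulnerable to exact mirroring in one coordinate, and the get-out-from-underneath device cannot rescue him at a wall because there he has only one direction in which to escape.

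The paper's man strategy avoids all of this and is much simpler: the man commits to full-speed straight runs between midpoints of sides, e.g.\ from $(1,0)$ to $(0,1)$ along $m(t)=(1-t,t)$, choosing the target midpoint so that the lion's relevant coordinate starts strictly on the wrong side (if $y_0<0$ run to $(0,1)$, so $l_2(t)\le y_0+t<t=m_2(t)$ throughout); the man therefore strictly outruns the lion in one coordinate for the entire leg, arrives safely, and repeats with the roles of the coordinates exchanged. The get-out-from-underneath device is needed only once per leg, in the degenerate case where the lion's relevant coordinate initially \emph{equals} the man's (there the man has two directions available, which is precisely what makes the device work). You would need to replace your tracking strategy with something of this committed, outrunning kind for the proof to go through.
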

\begin{proof}
We describe the strategy when the man starts at $(1,0)$ and the lion starts 
at the point
$(x_0,y_0)$ -- the general case is the same. The lion has an obvious winning 
strategy: in each cordinate, run at full speed towards the man, and
when that coordinate is equal to that of the man then keep the coordinate the
same as that of the man. This catches the man in time at most $2$.

For the man's strategy, we will give a strategy for the man which starts with  
him running at full
speed to one of $(0,1)$ and $(0,-1)$ without being caught. From here he 
repeats the strategy, thus surviving for all time.

We split into three cases: if $y_0<0$ then the man runs to $(0,1)$ and
so does not get caught. If $y_0>0$ then the man runs to $(0,-1)$ and, again, 
does not
get caught.

Finally we deal with the case $y_0=0$. Here we follow the strategy from the
previous proof: if the lion's $y$-coordinate at time $s$ is equal to $s$ for
all $0<s<t$, some $t>0$, then the man runs to $(0,-1)$, and otherwise he
runs to $(0,1)$. Again, he is not caught.
\end{proof}

\section{Race to a Point}\label{s:race_to_a_point}

In this section we give examples of race to a point games in which
both players have a winning strategy, and in which
neither player has even a drawing strategy (meaning, of course, a 
strategy that guarantees the player either a win or a draw in each play of
the game). The first of these is included
for the sake of completeness, and also out of interest, because what is 
somehow the `obvious' example is in fact not an example at all. The second
will form the basis for an example of a lion and man game 
involving two lions pursuing a man.

In general, if the game is symmetric, with the two players starting at
the same point, then it is clear that each player has a drawing
strategy, namely `copy the other player'. What about both players
having a winning strategy?

It is natural to try to construct an example based on the idea that, in a game
where one has to name a higher number than one's opponent, saying
`my opponent's number plus 1' would be a sensible thing to do, if it were
allowed. So we let $X$ consist of two points $x$ and $y$ at distance 1,
joined by some disjoint paths of length $1+1/n$ for every $n$ (the distance 
apart of two points on different paths is irrelevant to the argument). 
Both players 
start at $x$ and the target is $y$. Then the analogue of the above would be
for Player $A$ to move as follows: if Player $B$ runs along the path of
length $1+1/n$, then Player $A$ runs along the path of length $1+1/(n+1)$ 
instead, thus 
arriving at $y$ before $B$ does.

Surprisingly, this is not correct. For unfortunately 
Player $B$ might `backtrack' and change his path. Or he might
wait at $x$ for a while before proceeding (and there may not even be a
`first' path that he moves onto). So the above strategy is not 
well-defined. In fact, there is no winning strategy, as we now show. This 
proposition should be contrasted with the one that follows it.

\begin{proposition}
  Let $X$ be the metric space defined above. Then in the race to a point
  game starting at $x$ and ending at $y$ neither
  player has a winning strategy.
\end{proposition}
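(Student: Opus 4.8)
The plan is to argue that neither player can have a winning strategy by symmetry together with a fixed-point/continuity obstruction, exactly in the spirit of the "both have winning strategies would give a common fixed point" discussion. Since the game is symmetric (both start at $x$, both aim for $y$), it suffices to show that Player $B$ has no winning strategy; the same argument with the roles swapped handles Player $A$. So suppose, for contradiction, that $\MJWPhi$ is a winning strategy for Player $B$ (viewed as a function from $A$-paths to $B$-paths respecting no-lookahead).

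First I would observe that a winning strategy for $B$ must in particular \emph{beat the constant path}: if Player $A$ simply stays at $x$ forever, then $\MJWPhi$ produces a $B$-path that reaches $y$ at some finite time, hence runs (eventually monotonically, after possibly backtracking) along one of the paths of length $1+1/n$ for some particular $n$, arriving at time at least $1+1/n$. Fix that $n$. Now I would have Player $A$ exploit this: let $A$ \emph{wait at $x$ for a short time $\eta$ and then run along the path of length $1+1/(n+1)$ at full speed}, arriving at $y$ at time $\eta + 1 + 1/(n+1)$. For $\eta$ small enough this is strictly less than $1+1/n$. The key point, and the reason the no-lookahead rule saves us, is that $B$'s position at all times $t \le \eta$ is forced to agree with $B$'s position when $A$ used the constant path (since $A$'s path agrees with the constant path on $[0,\eta]$); so at time $\eta$ Player $B$ is still on his way along (or committed to) the length-$(1+1/n)$ path and cannot have arrived at $y$. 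But then $B$ needs at least time $1/n \cdot(\text{remaining fraction})$ more — more precisely, at time $\eta$ Player $B$ is at some point at distance $\ge$ (something bounded below, for $\eta$ small) from $y$ along a path of length $1+1/n$, so $B$'s total arrival time is at least (close to) $1 + 1/n > \eta + 1 + 1/(n+1)$. Hence $A$ arrives first and $\MJWPhi$ is not winning — contradiction.

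The step I expect to be the main obstacle is making the "$B$ is forced to be far from $y$ at time $\eta$" claim fully rigorous, because a priori $B$'s response to the constant path could reach $y$ at a time only slightly more than $1+1/n$ for the \emph{smallest available} $n$, and one must rule out that this time is actually very close to $1$ (which it cannot be, since every path has length $\ge 1+1/n$ for some finite $n$, hence length $> 1$, so there is a genuine gap $1/n$ to play with for that specific $n$) and also handle the annoyance that, before committing to the length-$(1+1/n)$ path, $B$ might have wandered partway down several other paths. The clean way to handle this is: let $\tau$ be the (finite) arrival time of $B$'s response to the constant $A$-path, and let $n$ be the index of the path on which $B$ is located at time $\tau$; then $\tau \ge 1 + 1/n$ is forced because $B$ is at $x$ at time $0$ and at $y$ at time $\tau$, and the only route between them of length $< 1+1/n$ would have to use... well, there is none shorter in that path, but $B$ could use a \emph{different} path — so actually one takes $n$ to be the index minimising $1+1/n$ over all paths $B$ ever enters, equivalently uses that $d(x,y)=1$ is realised only in the limit, never attained. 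I would then pick $\eta$ with $\eta < 1/(n(n+1))$ so that $\eta + 1 + 1/(n+1) < 1 + 1/n \le \tau$, and conclude. A symmetric remark: one should double-check the degenerate possibility that $B$'s response to the constant path backtracks infinitely and never settles, but since it is a path reaching $y$ at finite time $\tau$, its image in $[0,\tau]$ is a compact connected subset of $X$ containing $x$ and $y$, forcing it to traverse some single path fully, which pins down $n$. Filling in these compactness/connectedness details is the real work; the rest is the elementary timing inequality above.
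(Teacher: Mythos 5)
There is a genuine gap, and it is exactly at the step you flag as "the main obstacle": the claim that at time $\eta$ Player~B is ``committed'' to the length-$(1+1/n)$ path, so that his arrival time is still at least roughly $1+1/n$. The no-lookahead rule only forces B's response to your deviating A-path to agree with his response to the constant path on $[0,\eta]$; after time $\eta$, once A has visibly deviated, B's strategy may do something completely different. Concretely, at time $\eta$ B is within distance $\eta$ of $x$, so he can return to $x$ by time $2\eta$ and then run along a path of length $1+1/K$ with $K$ huge, arriving by time $2\eta+1+1/K$. Choosing (as the strategy may) $K$ with $\eta+1/K<1/(n+1)$, this beats your A-path's arrival time $\eta+1+1/(n+1)$. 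So the index $n$ you extract from B's response to the constant path controls nothing about B's response to the deviated path, the comparison with $\tau$ is a comparison across two different plays, and no contradiction is obtained. The compactness/connectedness details you propose to fill in (pinning down which spoke B traverses against the constant path) do not address this.

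The repair is to pin the strategy-player down at a single \emph{time}, not to a choice of spoke. Against the constant opponent path, B's path must reach $y$, hence must pass through the interior of some spoke near $x$; at such a moment $t$ he is at a point $z$ with $d(z,y)=1+\eps$ for some $\eps>0$ (a point at arc-distance $s$ along the spoke of length $1+1/k$, with $0<s<1/k$, has distance $\min(1+s,\,1+1/k-s)>1$ from $y$). By no-lookahead, \emph{every} opponent path that is constant on $[0,t]$ forces B to be at $z$ at time $t$, and then B's arrival time is at least $t+1+\eps$ \emph{no matter what he does afterwards}, since $1+\eps$ is the metric distance remaining. The opponent now waits until time $t$ and runs a path of length $1+1/n$ with $1/n<\eps$, arriving at $t+1+1/n<t+1+\eps$. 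This is the decisive inequality your version lacks: the lower bound on B's arrival time must come from his distance to $y$ at the moment the opponent springs, not from the spoke he happened to choose in a different play.
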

\begin{proof}
Let $\MJWPhi$ be a strategy for Player A. Suppose Player $B$'s path $p$ is just
constant at the start point. Then, according to  $\MJWPhi$, Player $A$'s path 
$q=\MJWPhi(p)$ reaches the finish
at some time. On this path, at some time $t$ Player $A$ is strictly more
than distance 1 from the finish: say he is at point $z$, at distance $1+\eps$.

Now, by the no-lookahead rule it follows that for every Player $B$ path that
stays at the start point until time $t$ we have that Player $A$ is at the same
point $z$ (at distance $1+\eps$ from the finish) at time $t$. 
But now consider a
path for Player $B$ that waits at the start until time $t$ and then runs
straight to the finish along a path of length $1+1/n < 1+\eps$. In that play
of the game, $B$ reaches the finish before $A$.
\end{proof}

However, we can modify the idea of this construction, by allowing the
various paths from $x$ to $y$ to connect to each other. The simplest way
to do this is as follows.

\begin{proposition}
  Let $X$ be the subset of the plane consisting of the open upper half plane
  with $(0,0)$ and $(1,0)$ added. Then in the race to a point game on $X$
  starting at $(1,0)$ and finishing at $(0,0)$ both players have a
  winning strategy.
\end{proposition}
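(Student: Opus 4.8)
The plan is to exploit the fact that in the open upper half plane (with the two boundary points $(0,0)$ and $(1,0)$ attached) there are paths of length arbitrarily close to $1$ from the start $(1,0)$ to the finish $(0,0)$, but the \emph{only} path of length exactly $1$ is the segment along the $x$-axis, which is \emph{not} in $X$ (its interior points are missing). So each player, at a positive distance from the finish, can always do slightly better by cutting up into the open half plane; the difficulty that killed the previous construction --- the opponent stalling at the start or backtracking so that there is no ``first move'' --- is now handled because connecting the paths lets a player \emph{react continuously} to the opponent's position rather than having to choose a discrete path in advance.

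Concretely, here is the strategy I would give for Player $A$ against an opponent path $p$. Let $D(s) = \|p(s) - (0,0)\|$ be the opponent's distance to the finish at time $s$. Player $A$ watches $p$ and, as long as $D(s) \ge \tfrac12$ (say), stays put at $(1,0)$ --- this respects no-lookahead since $A$'s position at time $t$ depends only on $p|_{[0,t]}$. Let $\tau$ be the first time $D(\tau) < \tfrac12$ (if no such time exists, $B$ never gets within $\tfrac12$ of the finish, in particular never reaches it, and $A$ can just amble to the finish and win). At time $\tau$, Player $B$ is at some point $q = p(\tau)$ in the open upper half plane at distance $D(\tau) \in (0,\tfrac12)$ from the finish, so $B$ still needs time at least $D(\tau) > 0$ to finish, and in fact --- crucially --- because $q$ lies in the \emph{open} half plane, any path from $q$ to $(0,0)$ inside $X$ has length \emph{strictly greater} than $D(\tau)$: it must detour around the missing segment. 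Quantitatively, if $q = (a,b)$ with $b > 0$ then every path in $X$ from $q$ to the origin has length at least $\sqrt{a^2+b^2} + (\text{a positive quantity depending on } b)$; I would extract a clean lower bound $\ell(q) \ge D(\tau) + \eta$ for an explicit $\eta = \eta(b) > 0$, e.g. by noting the path must reach the $x$-axis (or come within $b/2$ of it) and then the two legs are each bounded below appropriately. Now Player $A$ runs from $(1,0)$ straight along the $x$-axis toward the finish: wait, that segment is not in $X$ either --- instead $A$ runs along a path from $(1,0)$ to $(0,0)$ of length $1 + \theta$ for $\theta$ chosen so small that $1 + \theta < \tau + \ell(q)$; such $\theta$ exists because $\tau \ge 1 - D(\tau) > 1 - \tfrac12 = \tfrac12 > 0$ is \emph{not} quite enough on its own, so the real point is the comparison of \emph{remaining} times from time $\tau$ on: at time $\tau$ Player $A$ has been sitting at $(1,0)$ and needs a path to the finish, while Player $B$ is at $q$ needing length $> D(\tau) + \eta$... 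I need $A$'s total time $1 + \theta$ to beat $B$'s total time, which is at least $\tau + \ell(q) \ge \tau + D(\tau) + \eta$, and since $B$ must have travelled at least distance $1 - D(\tau)$ to get from $(1,0)$-ish... hmm, $B$ started at $(1,0)$ too, so $\tau \ge \|q - (1,0)\| \ge 1 - D(\tau) - (\text{detour slack})$; combining, $\tau + D(\tau) \ge 1$ roughly, so $B$'s time is at least $1 + \eta$, and $A$ picks $\theta < \eta$. This is the computation I would do carefully.

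The main obstacle --- and the step deserving the most care --- is \emph{quantifying $\eta$ uniformly enough} and making sure the chain of inequalities actually closes, i.e. that $A$'s chosen path of length $1 + \theta$ can be fixed \emph{in advance} (it can: $\theta$ is any fixed small positive number, and the argument shows that whenever $B$ actually finishes, $B$'s time exceeds $1 + \theta$ provided $\theta$ was picked small relative to the geometry) while $A$'s strategy remains no-lookahead. One subtlety to address: if $B$ reaches the finish $(0,0)$ while still having $D(s) \ge \tfrac12$ just before --- impossible, since $D$ is continuous and $D = 0$ at the finish, so $\tau$ is well-defined and $\tau < (B\text{'s finishing time})$. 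Another subtlety: $B$ might finish at a time later than $A$'s fixed $1+\theta$ only if $B$ dawdles, which only helps $A$; and if $B$ is efficient, the detour bound forces $B$'s time above $1$ by a definite margin. By symmetry (the space and the role of the two points are interchangeable under $x \mapsto 1 - x$), the identical strategy works for Player $B$, so both players have winning strategies. I would write the geometric lemma (lower bound on path length from an interior point to the origin, in terms of its height $b$) as a separate displayed claim and then assemble the time comparison.
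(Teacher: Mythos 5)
Your argument has a fatal geometric error at its core. You claim that because $q=(a,b)$ with $b>0$ lies in the open half plane, ``any path from $q$ to $(0,0)$ inside $X$ has length strictly greater than'' the Euclidean distance, with a positive detour penalty $\eta(b)$. This is false: the straight segment from $(a,b)$ to the origin has all its points other than the endpoint at height $tb>0$, so it lies entirely in $X$, and the $X$-length from any interior point to the origin is \emph{exactly} its Euclidean distance. The only forbidden set is the open segment $\{(x,0):0<x<1\}$, and what it actually gives you is the pointwise strict inequality $r(t)>1-t$ for $t>0$ (where $r(t)$ is the opponent's distance to the origin), i.e.\ the opponent's finishing time is strictly greater than $1$ --- but \emph{not} uniformly so. Player B can finish at time $1+\epsilon$ for every $\epsilon>0$ (run along a shallow circular arc through $(1,0)$ and $(0,0)$), so there is no fixed $\eta>0$, hence no admissible choice of your $\theta$, and the chain of inequalities you hope to close cannot close. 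On top of this, your Player A waits at $(1,0)$ until time $\tau\ge\frac12$ and only then runs a path of length at least $1$, so A finishes no earlier than time $\frac32$; a B who finishes at time $1.1$ beats this strategy outright. (Your parenthetical ``I need $A$'s total time $1+\theta$'' forgets the waiting time $\tau$.)

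The paper's proof avoids all of this by making A react \emph{continuously from time $0$} rather than waiting and then committing to a fixed path: in polar coordinates A plays $s(t)=(r(t)+2(1-t))/3$ with a suitable angle $\phi(t)=t+s(t)-1$. Since $r(t)>1-t$ for $t>0$, one gets $s(t)<r(t)$ for all $t>0$, so A is always strictly closer to the origin and reaches it strictly first, while $\phi(t)=(r(t)-(1-t))/3>0$ keeps A inside the open half plane. The moral is that the advantage conferred by the missing segment is an instantaneous, non-uniform one, and it can only be exploited by a strategy that converts the opponent's current deficit $r(t)-(1-t)$ into a lead at every moment; any ``wait, then run a pre-chosen path of fixed length $1+\theta$'' scheme fails for exactly the reason the paper's preceding proposition (the space with disjoint paths of lengths $1+1/n$) fails to admit a winner.
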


\begin{proof} We give a winning strategy for (say) Player A. 
It is convenient to work in polar coordinates $(r,\theta)$.  
Suppose that Player B
follows path $(r(t),\theta(t))$. Then Player 1 follows the path
$(s(t),\phi(t))$ given by
\begin{align*}
s(t)&=(r(t)+2(1-t))/3\\
\phi(t)&=t+s(t)-1\; .
\end{align*}
The key point is that $s(t)$ is always smaller than $r(t)$ for $t>0$ (because
we cannot ever have $r(t)=1-t$). The choice of $\phi$ ensures that
the path is Lipschitz and at the same time stays within the space.
\end{proof}

We now turn to our main aim in this section: a metric space (with
given start points for the two players and a given finish point) in
which neither player has even a drawing strategy.

\begin{theorem}
  There is a space $X$ in which for the race to a point game (with specified
starting positions and target) neither player has even a drawing strategy.
\end{theorem}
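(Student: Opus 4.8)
I would look for an explicit space $X$ built as a countable ``tower'' of small gadgets. Take a basic metric gadget $G$ with a distinguished entry point, two exit points, and designated positions for the two players inside it, and glue countably many rescaled copies $G_1,G_2,\dots$ in series, with $\operatorname{diam}(G_k)$ decreasing geometrically so that every path meeting all the $G_k$ still has finite length; place the target $z$ ``beyond'' the whole tower, so that reaching $z$ forces a player to traverse $G_1,G_2,\dots$ in order. The point of $G$ is to create, inside each $G_k$, a little sub-race in which the no-lookahead rule is fatal to \emph{whichever} player has committed to a fixed strategy: before the sub-race in $G_k$ can be finished, that player must (by continuity of paths together with no-lookahead) reveal enough of his intended route that the other player can either exit $G_k$ strictly ahead, or else take a ``shortcut'' that the committed player's own position has made short, and run straight to $z$. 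The design is arranged so that the shortcut only beats the committed player when he has ``misplayed'' in $G_k$, and so that exiting ahead is otherwise always available --- but at the cost of entering $G_{k+1}$ in the handicapped role, so that merely iterating ``exit ahead'' forever never reaches $z$ (that line of play is only a draw, and the content of the theorem is precisely that a fixed strategy cannot enforce even this draw).

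\textbf{Key steps.}
First I would construct $G$ and check that it, and the infinite glued tower $X$, really are metric spaces realising the claimed distances; with the lengths fixed this is routine, and the geometric rescaling keeps everything bounded and makes every ``play all the rounds'' path Lipschitz after a harmless time reparametrisation. Next I would prove the \emph{local lemma}: in a single copy $G_k$, if one player's strategy is fixed then, whatever position and time the opponent enters $G_k$ at, the opponent has a continuation inside $G_k$ forcing one of the two outcomes above. This is the one place where ``no lookahead'' does the work --- the committed player's position at the ``decision instant'' of $G_k$ is determined by the opponent's past only, so the opponent chooses that past to put him in a bad spot; this is in the spirit of the argument that Player A has no winning strategy in the first race-to-a-point proposition, and of the Brouwer-type argument ruling out continuous winning man strategies. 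Then, given an arbitrary strategy $\MJWPhi$ for Player~A, I would assemble a single Player-B path beating it: apply the local lemma in $G_1$; if the shortcut outcome occurs we are done, and otherwise we have entered $G_2$ with A committed and handicapped, so recurse. If the shortcut outcome never occurs we get an infinite sequence of rounds, and here one must show the handicap accumulates \emph{strictly}, so that past some finite round A can no longer avoid the shortcut outcome --- this is what makes the recursion terminate in a genuine B-win. Finally I would run the mirror-image argument with the roles of A and B exchanged, which requires building the tower with exactly the right small asymmetry so that this second argument is also available.

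\textbf{Main obstacle.}
The delicate part is the local lemma together with the ``strictly accumulating handicap'': $G$ must be designed so that no-lookahead genuinely forces the committed player into a \emph{quantitatively} exploitable position within \emph{bounded} time, \emph{and} so that the exploitation in round $k$ leaves him measurably worse off entering round $k{+}1$, with these losses adding up to overwhelm any fixed initial advantage after finitely many rounds --- all while the total length stays finite so that the opponent's beating path remains a legal Lipschitz path that actually reaches $z$. Making a single gadget meet all of these demands simultaneously, and verifying that the path produced by the recursion reaches $z$ strictly before $\MJWPhi(p)$ does (rather than merely drawing), is where essentially all the difficulty lies; the rest is bookkeeping.
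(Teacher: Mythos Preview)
Your approach is genuinely different from the paper's, and the gap is real.

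The paper does \emph{not} build an explicit tower of gadgets. Instead it constructs $X$ as two arcs of the unit circle with ``spokes'' to the origin attached at points of two sets $A,B\subset(0,1)$, and reduces ``Player~A has a drawing strategy'' to the existence of a continuous map $f:[0,t_0]\to[0,t_0]$ with $f(0)=0$ and $f(B)\subset A$. The sets $A,B$ are then produced by a transfinite diagonalisation: well-order all such continuous maps by the first ordinal of cardinality $\mathfrak c$ and, at each stage, place one point into $B$ and forbid its image from $A$ (and symmetrically). This is an Axiom-of-Choice argument, and the paper explicitly records as an open problem whether an explicit (choice-free) construction exists.

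Your plan, by contrast, promises an explicit example, which would resolve that open problem; but the proposal never actually builds the gadget $G$ or proves the local lemma. Everything hinges on the sentence ``$G$ must be designed so that no-lookahead genuinely forces the committed player into a quantitatively exploitable position within bounded time, and so that the exploitation in round $k$ leaves him measurably worse off entering round $k{+}1$, with these losses adding up \ldots'' --- and you give no candidate for $G$, no definition of the handicap function, and no argument that the handicap increments do not simply vanish with the shrinking diameters $\operatorname{diam}(G_k)$. There is also a tension you do not address: the tower must have finite total length so that paths reaching $z$ are legal, yet the per-round gain available to the opponent is bounded by something like $\operatorname{diam}(G_k)$, so the accumulated handicap is bounded by a convergent series; you then need to argue that the threshold at which the shortcut becomes unavoidable shrinks at least as fast, and nothing in the proposal indicates why a gadget with that property should exist. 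Finally, remember that ``neither player reaches $z$'' is a draw, so you must rule out the possibility that the committed player, once sufficiently behind, simply stops trying to reach $z$ and plays for that draw instead; your recursion assumes he keeps advancing through the tower.

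In short: the paper's proof is non-constructive but complete; your proof is a wish-list for a gadget whose existence is precisely what is in doubt, and which the authors themselves flag as open.
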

\begin{proof}
The space will be the following: we will pick subsets $A$ and $B$ of the
interval
$(0,1)$. The space $X$ will be a subset of the complex plane: it will
consist of the subset $\{ e^{it}:\; 0 \leq t \leq 1 \} \cup 
 \{ -e^{it}:\; 0 \leq t \leq 1 \}$ of the unit circle, 
together with `spokes' (radii to the origin)
from each point $e^{ia}$, $a \in A$ and $-e^{ib}$, $b \in B$.

Player~A starts at $1$ and Player~B at $-1$, and they are
racing to the origin. We start by finding conditions on $A$ and $B$ 
that would ensure that neither player has
a drawing strategy.

Suppose that Player~A has a drawing strategy.  Fix a point $b$ in $B$
with $b>1/2$. Consider the Player~B path going round the circle to $-e^{ib}$
and then along the spoke to the origin. Player~A's strategy gives some
path $p$, which of course must leave the unit circle at some time
(as it has to reach the origin). Let $t_0$
be the last time at which Player~A is on the boundary circle -- certainly
$t_0 \leq b$. Since Player~A is following a strategy he
follows exactly the same path $p$ whenever Player~B sets off round the
circle at full speed, at least until Player~B deviates from the path
above. If there is some time $t$ at which Player~B is at the end of
one of his spokes and Player~A is not on or at the end of one of his,
then the Player~B path `go around the boundary until $t$ and then
along the spoke' beats Player~A's strategy (since at the time Player~B
leaves the circle Player~A is not able to).

Now, during the time $[0,t_0]$ Player~A's position varies
continuously. Thus, if no such $t$ occurs, then there is a continuous
function (even Lipschitz) from $[0,t_0]\to[0,t_0]$ mapping 0 to 0 such
that every point of $B$ is mapped to a point of $A$.

The same of course has to hold with the players reversed. Thus we shall
be done if we can find sets $A$ and $B$ in $(0,1)$ such that for no 
$t_0>0$ is there a 
continuous function $f:[0,t_0]\to[0,t_0]$ with $f(0)=0$ and
$f(B)\subset A$, and nor is there such a continuous function with
$f(A) \subset B$.

We use a well-ordering argument to construct these sets. It
will be convenient to use the term {\em large}  to mean of the same
cardinality as the reals and the term {\em small} to mean of strictly
smaller cardinality than the reals. The reader is encouraged to think
of these as uncountable and countable respectively (although formally that
would assume the continuum hypothesis).

Consider the set $S_n$ consisting of all continuous functions from
$[0,1/n]$ to itself that fix zero, for each natural number $n$, and let 
$S$ denote their union (over all $n$). Then $S$ has the same cardinality
as the reals, and so we may well-order $S$ as 
$\{ f_{\beta}:\ \beta<\alpha \}$, where $\alpha$ is the first  
ordinal with cardinality that of the reals.

We construct the sets $A$ and $B$
inductively: at any stage $\beta<\alpha$ we will have put 
at most a small number of 
points in $A$ and forbidden at most a small number of points from $A$ --- we 
will call these sets $A^+$ and $A^-$ respectively. And similarly for $B$ 
(we start with all of these sets empty). 
At the $\beta$th stage, consider the function $f=f_{\beta}$: say it maps
$[0,1/n]\to[0,1/n]$. We want to pick a point $b$ to put in $B$
and an $a$ to forbid from $A$ with $f(b)=a$: that is, we want to make
sure that $f(B)\not\subset A$. 

First, suppose $f([0,1/n])$ is large. Since there are only a small
number of points that we have put in $A$ so far (i.e., $A^+$ is small)
we have that $f^{-1}(f([0,1/n]\setminus A^+)$ is large. Since $B^-$ is also
small we can pick $b$ in $f^{-1}(f([0,1/n]\setminus A^+)\setminus
B^-$. We put $b$ in $B^+$ and $a=f(b)$ in $A^-$.

If $f([0,1/n])$ is not large, then, by the Intermediate Value Theorem,
it must be constantly zero. Since $0\not \in A$, we just pick a point
$b$ in $[0,1/n]\setminus B^-$ and place it in $B^+$.

We now do the same the other way round: putting a point $a'$ in $A^+$ and
$b'$ in $B^-$ with $f(a)=b'$.

Continuing in this way we obtain the sets we require by letting
$A=\bigcup A^+$ and $B=\bigcup B^+$. Indeed, suppose that we are given
a continuous function $f\colon[0,t_0]\to [0,t_0]$. Pick $n$ with
$1/n<t_0$. Then $f$ restricted to $[0,1/n]$ belongs to $S$, and so 
there is a point $a\not\in A$ and $b\in B\cap[0,1/n]$ with
$f(b)=a$.
\end{proof}

It would be interesting to know whether there is an explicit construction
(meaning without use of the Axiom of Choice or similar) of such an example.

We now show how to use the construction above to give a pursuit game of two 
lions against a man in which neither
player has a winning strategy.

The idea is to take the above space $X$  and attach an infinite ray
to the origin (out of the plane). Suppose first that we 
consider the usual lion and man game (with only one lion) on this
space, with the
lion and man starting at $(-1,0)$ and $(1,0)$ respectively. Then no lion
strategy always catches the man, since (as above) there is a man path that 
reaches
the origin before the corresponding lion path; if the man now runs at full 
speed along the infinite ray then he will not be caught. Unfortunately, it is
not correct to argue that the man cannot have a winning strategy merely 
because he cannot guarantee to reach the origin first -- it 
is easy to see that the man
can win while staying entirely within $X$.

However, the addition of a second lion is enough to render this impossible.
When we speak of `two lions' they are understood to form a team: the lions
win if at least one of them catches the man.

\begin{theorem}
  There is a metric space $Y$ in which, for the game of two lions against a
man (with specified starting positions), neither player has a winning
strategy.
\end{theorem}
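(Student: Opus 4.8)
The plan is to build $Y$ from the ``race to a point'' space $X$ of the previous theorem by attaching an infinite ray $R$ (a copy of $[0,\infty)$) to the origin, with the metric extended in the obvious geodesic way, so that a point at distance $\rho$ along the ray is at distance $\rho$ from the origin and at distance $\rho$ plus the $X$-distance from any point of $X$. The man starts at $(1,0)$ and the two lions both start at $(-1,0)$ (or we may split them; the starting configuration can be tuned, but having both lions essentially at $B$'s old position is natural). The two assertions to prove are: (i) the lions have no winning strategy, and (ii) the man has no winning strategy.

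For (i) — the lions cannot win — I would run essentially the argument already used for the single-lion remark, but now I must defeat a \emph{pair} of lion paths simultaneously. Given a team strategy for the lions, feed it the man path that goes round the boundary circle at full speed and down a spoke $-e^{ib}$, $b\in B$, to the origin, exactly as in the race-to-a-point proof. The key is that the construction of $A$ and $B$ actually gave us that \emph{neither} continuous self-map of $[0,t_0]$ fixing $0$ sends $B$ into $A$ nor sends $A$ into $B$; I should check that the same well-ordering construction, enumerating finite \emph{tuples} of continuous functions rather than single functions, also defeats any pair of continuous maps at once (i.e.\ for no $t_0>0$ and no two continuous $f,g:[0,t_0]\to[0,t_0]$ fixing $0$ do we have $f(B)\subset A$ \emph{and} $g(B)\subset A$, etc.) — this is a routine strengthening since at each stage of the induction we only kill a small set. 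Consequently there is a man path reaching the origin strictly before \emph{both} lion paths do; the man then sprints up the ray $R$ and, since all speeds are equal and he has a head start, is never caught. Hence no lion team strategy is winning.

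For (ii) — the man cannot win — here is where the \emph{second} lion earns its keep, and this is the step I expect to be the main obstacle. The idea is that one lion ``shadows'' the man inside $X$ to force him, eventually, onto the ray (or to be caught in $X$), while the other lion guards the mouth of the ray. More precisely: lion~1 plays a strategy that guarantees that the man, if he stays in $X$ forever, is caught — by the bounded/compact analysis one expects a single lion \emph{can} win the pure chase on the compact space $X$ alone (the man's only escape in the earlier discussion was precisely the ray), so fix such a winning lion-1 strategy for the game restricted to $X$. Then against any man strategy, either the man eventually enters the ray, or he stays in $X$ and lion~1 catches him. If the man does enter the ray, he must pass through the origin; I want lion~2 positioned so as to catch him there. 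The delicate point is the no-lookahead rule: lion~2 cannot know in advance \emph{when} the man will commit to the origin. The fix is to let lion~2 mirror the man's ``race'' behaviour — lion~2 runs a race-to-the-origin strategy of the kind constructed in the earlier proposition (Player~A's explicit Lipschitz strategy $s(t)=(r(t)+2(1-t))/3$, $\phi(t)=t+s(t)-1$) adapted so that lion~2 reaches the origin no later than the man whenever the man heads for it, and otherwise stays near the origin ready to pounce on the ray. Since the man must reach the origin before going up the ray, and lion~2 is there first (or simultaneously, with lion~2 then free to pursue up the one-dimensional ray at equal speed and never fall behind), the man is caught. Combining: against the two-lion team following (lion~1 = $X$-winning chase, lion~2 = origin-race/ray-guard), the man is caught whatever he does, so the man has no winning strategy. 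The honest difficulty is verifying that these two lion sub-strategies can be combined into a single legitimate no-lookahead team strategy — in particular that lion~2's ``race or guard'' behaviour is a genuine strategy (its action at time $t$ must depend only on the man's path up to time $t$), and that the handoff between ``man is still in $X$'' and ``man has committed to the ray'' is detectable causally; I would handle this exactly as the ``escape from underneath the lion'' device was handled in Section~3, using suprema of times at which a given condition has held, which respects no-lookahead.
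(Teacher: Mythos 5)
Your overall architecture (space $X$ plus a ray at the origin; lions lose because the man can win the race to the origin; man loses because the lions can combine an origin-guard with a chase) matches the paper's, but there is a genuine gap in your argument that the man has no winning strategy. You fix a ``winning lion-1 strategy for the game restricted to $X$'' on the grounds that one expects a single lion to win the pure chase on the compact space $X$. This is false: $X$ contains topological circles (two spokes together with the arc of the boundary between their feet form a closed curve, and all shortest detours pass through points of that curve), and on a circle the man evades a single lion forever by simply maintaining his distance. The paper itself warns of exactly this: ``it is easy to see that the man can win while staying entirely within $X$'' against one lion. The missing idea is that the lion guarding the origin is not just blocking the ray --- it is \emph{cutting every cycle of $X$}, since all cycles pass through the origin. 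The paper's order of operations is therefore essential: lion~1 uses the race-to-a-point result to reach the origin before the man can, then parks there forever; only now is the man confined to a tree (an arc with dangling half-open spokes), and only now can lion~2 hunt him down by sweeping the arc and trapping him on a spoke between itself and the stationary lion. Your division of labour (lion~1 chases in $X$, lion~2 guards the origin) cannot work as stated because the chase in $X$ is unwinnable until the origin is permanently occupied.

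A secondary difference: you start both lions at $(-1,0)$ and therefore must show the man can beat \emph{two} lion paths to the origin, which forces you to strengthen the transfinite construction so that no \emph{pair} of continuous maps $f,g$ satisfies $f(A)\subset B$ and $g(A)\subset B$ on any $[0,t_0]$. You call this routine, but it is not obviously so: the bad set for a pair is $f^{-1}(B^+)\cup g^{-1}(B^+)$, and level sets of continuous functions can be large, so avoiding both preimages with a single point is not guaranteed merely because $B^+$ is small. The paper sidesteps the issue entirely by attaching a unit segment from $(-2,0)$ to $(-1,0)$ and starting the second lion at $(-2,0)$: it is then at distance greater than $2$ from the origin and simply cannot interfere with the single-lion race argument. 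You should adopt that placement (or genuinely carry out the pair version of the construction) and, more importantly, repair the chase argument as above.
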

\begin{proof}
We form $Y$ from $X$ by
attaching an infinite ray at the origin and also a line of length
1 from $(-2,0)$ to $(-1,0)$. The lions start at $(-2,0)$ and
$(-1,0)$ and the man starts at $(1,0)$.

As above we see that the no lion strategy stops the man getting to the
origin first (the second lion is too far away to affect this argument). Once on
the ray the man, of course, wins. Hence there is no winning strategy for the
lions.

To see that the man cannot have a winning strategy, note first that the
man can never guarantee to reach the infinite ray (thanks to the lion that
starts at $(-1,0)$). If we then let that lion stay at the origin for ever, we 
can let the other lion run around the circle, eventually trapping the man
on a spoke. Hence there is no winning strategy for the man.
\end{proof}

\section{Open Questions}

In this section we collect the various open problems that have been
mentioned in the paper, as well as giving some other ones.

The most interesting question is probably that of whether or not there exists
a metric space, necessarily non-compact, in which neither lion nor man has
a winning strategy in the bounded-time game.

\begin{question}
Does there exist a metric space $X$ in which for the bounded-time lion
and man game neither player has a winning strategy?
\end{question}

It would also be very nice to know what happens for the {\it
  unbounded-time} game in a compact (or indeed any) metric space.

\begin{question}
Let $X$ be a compact metric space. In the unbounded-time lion and man 
game must it be the case that at least one player has a winning strategy?
\end{question}

Then there is the question about whether or not the general situation
becomes nicer if we restrict to Euclidean spaces.

\begin{question}
Is there a subset of a Euclidean space for which both the lion and the
man have winning strategies for the bounded-time game?
\end{question}

We believe that the answer to this question is
no. Indeed, we believe that much stronger
statements should be true, determining exactly who wins in a given
subset of Euclidean space: the lion should win precisely when the subset
is `tree-like' or `dendrite-like'. An example of such a statement would be the 
following (where an
{\it arc} is an injective path).

\begin{conjecture}
Let $X$ be a subset of a Euclidean
space that has an associated path length metric (in other words, any two
points are joined by a path of finite length), with $X$ compact in this metric.
Then the lion has a winning 
strategy for the lion and man game on $X$
from all starting positions if and only if any two points of $X$ are
joined by a unique arc.
\end{conjecture}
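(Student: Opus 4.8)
The statement is a biconditional, and the two directions call for genuinely different arguments; the forward direction (unique arcs imply the lion wins) is, I believe, straightforward, while the converse is the real content.

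\textbf{Unique arcs $\Rightarrow$ the lion wins, from every start.} I would fix the lion's starting point as a root $o$. Since $X$ has a path-length metric in which any two points are joined by a finite-length path, it is a geodesic space, and in a uniquely arcwise connected space the geodesic between $x$ and $y$ is \emph{the} arc $[x,y]$, isometric to $[0,d(x,y)]$. Against a man path $m$, the lion plays $L(t)=$ the point at distance $t$ from $o$ along $[o,m(t)]$, which is defined so long as $t\le d(o,m(t))$. First one checks this is a legitimate lion path: no-lookahead is immediate, $L(0)=o$, and $L$ is $1$-Lipschitz -- the only case that is not obvious, where the man has ``retreated'' so that $L(s)$ lies past the branch point of $[o,m(s)]$ and $[o,m(t)]$, simply cannot occur before capture, since that branch point lies on $[m(s),m(t)]$ and hence is at distance $\ge d(o,m(t))-(t-s)$ from $o$. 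Now set $h(t)=d(o,m(t))-t$: it is continuous, $h(0)=d(o,m(0))>0$ since the starts are distinct, and $h(t)\le\operatorname{diam}(X)-t$, so $h$ vanishes at some $t_{\ast}\le\operatorname{diam}(X)$; there $t_{\ast}=d(o,m(t_{\ast}))$, so $L(t_{\ast})$ is the far endpoint of $[o,m(t_{\ast})]$, i.e.\ the lion has caught the man. (Note this argument uses neither compactness beyond boundedness nor the Euclidean embedding.)

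\textbf{Not uniquely arcwise connected $\Rightarrow$ the lion fails from some start.} If some two points are joined by two distinct arcs then $X$ contains a simple closed curve $C$, a circle of some circumference $\ell>0$ in the induced metric. Here the Euclidean hypothesis is essential and must be used: the $l_\infty^2$ ball of Section~3 is a compact path-length space that is not uniquely arcwise connected yet in which the lion wins from every starting position, so any proof must exploit the $l_2$ geometry, presumably through a Besicovitch-type escape. The plan is to choose the man's start at a point of $C$ and the lion's start so that it cannot immediately threaten all of $C$, and then to have the man survive forever by living near $C$: if $X$ equalled $C$ the man just stays at the point of $C$ antipodal to the lion and is never caught; in general he tracks a ``virtual'' lion position on $C$ and runs a Besicovitch-style sequence of steps of lengths $t_i$ with $\sum t_i=\infty$, $\sum t_i^2<\infty$, at each step moving along or transverse to $C$ in the direction that increases his separation.

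\textbf{The main obstacle.} The hard part -- and the reason this is only a conjecture -- is the converse when $X$ is strictly larger than a simple closed curve: the lion may leave $C$, cut through $X\setminus C$, and re-emerge far along $C$ in less time than running around it would take, forcing the man's virtual lion (and hence his own target point) to jump. One needs to show that whenever the lion does this it is simultaneously far from the man, so that the man can reposition in time, and crucially to make such an estimate uniform over all compact path-length subsets of Euclidean space -- circles, discs, theta-graphs, circles with hairs, and worse. Plausible tools are a degree or Brouwer-type argument in the spirit of the earlier proof that the man has no continuous winning strategy in the disc, or a reduction to the bounded-time game via the compactness machinery of Section~2 combined with a quantitative, localized version of the Besicovitch escape; but a clean uniform argument is exactly what is missing.
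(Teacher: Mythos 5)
The statement you are proving is posed in the paper as an open conjecture: the authors give no proof of either direction, so there is nothing to compare your argument against, and a complete proof here would be new.

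That said, your forward direction looks correct and is worth recording. Since $X$ is a compact length space, it is geodesic (Hopf--Rinow), and unique arcwise connectedness makes the unique arc $[x,y]$ a geodesic; your verification that the pursuit $L(t)=$ the point at distance $t$ from $o$ along $[o,m(t)]$ is $1$-Lipschitz is the one delicate point, and it goes through: if $p$ is the branch point of $[o,m(s)]$ and $[o,m(t)]$ with $s<t$, then $p\in[m(s),m(t)]$ gives $d(o,p)\ge d(o,m(t))-(t-s)>t-(t-s)=s$ before capture, so $L(s)$ lies on the common initial segment and $d(L(s),L(t))=t-s$ exactly. The intermediate-value argument on $h(t)=d(o,m(t))-t$ then finishes it, and you are right that this uses neither compactness beyond boundedness nor the Euclidean embedding. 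Your observation that the converse \emph{must} use the Euclidean ($l_2$) structure, because the $l_\infty^2$ ball of Section~3 is a compact, non-uniquely-arcwise-connected length space in which the lion still wins from every start, is exactly the right sanity check and explains why no purely metric argument can work.

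The gap is the converse, and you have named it yourself: once $X$ properly contains the simple closed curve $C$, the lion can shortcut through $X\setminus C$ and the ``antipodal tracking'' or Besicovitch-style escape has no uniform estimate guaranteeing the man can reposition. No argument is offered that closes this, so the proposal establishes only one implication of the biconditional; the other remains open, as it does in the paper.
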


Perhaps related to this is the following question. 
Lemma~\ref{l:aa} clearly applies to
any metric space $X$ that is isometric to the path length metric on a
compact metric space -- for example, it applies to the real line, since the
real line is the path length metric of a suitable compact subset of 
the plane. 

\begin{question}
Is there a natural larger class of spaces to which Lemma~\ref{l:aa} 
applies?
\end{question}

Finally, it would be interesting to know how much of our use of the Axiom of
Choice is actually necessary.

\begin{question}
  Is there a proof of Lemma~\ref{l:aa} that does not use Zorn's Lemma or
similar?
\end{question}

\begin{question}
  Is there an `explicit' (constructive) example of a metric space in
  which, for race to a point, neither player has a drawing strategy?
In particular, can this happen for a Borel subset of a Euclidean space?
\end{question}

\pagebreak
\section{Appendix}

In this section we generalize the argument given in
Section~\ref{s:finite_approximation} that the lion does not have a
winning strategy in the original lion and man game in the closed disc.
Recall that this was based on some particular properties of the
Besicovitch strategy.

For the lion and man game in a metric space $X$, we call a man
strategy $\MJWPsi$ {\em locally finite} if it satisfies the following
property: if  $l$ and $l'$ are any two lion paths that agree
on $[0,t]$ for some $t$ then the corresponding man paths $\MJWPsi(l)$
and $\MJWPsi(l')$ agree on $[0,t+\eps]$ for some $\eps>0$ (which may
depend on $l|_{[0,t]}$). Thus, informally, the man commits to doing
something for some positive amount of time dependent only on the
situation so far.

\begin{proposition}\label{p:lf}
  Suppose that $X$ is a complete metric space, that $\MJWPsi$ is a
  locally finite man strategy and that $\MJWPhi$ is any lion strategy.
  Then there exist paths $l\in L$ and $m\in M$ with
  $\MJWPhi(m)=l$ and $\MJWPsi(l)=m$. Moreover, these paths are unique.
\end{proposition}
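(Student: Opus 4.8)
The strategy is to build the common fixed point $l$ and $m$ by a transfinite (or rather, countable-ordinal-indexed) construction, exactly mimicking the informal argument for the Besicovitch case given at the start of Section~\ref{s:finite_approximation}, and then to use completeness to pass to the limit. I would proceed as follows. Set $t_0=0$. Having constructed $m|_{[0,t_i]}$ and $l|_{[0,t_i]}$ consistently (meaning: $l|_{[0,t_i]}=\MJWPhi(m)|_{[0,t_i]}$ and $m|_{[0,t_i]}=\MJWPsi(l)|_{[0,t_i]}$, which makes sense by the no-lookahead rule), apply local finiteness of $\MJWPsi$ to the lion path determined so far: there is $\eps_i>0$ such that $\MJWPsi$ commits the man's path on $[0,t_i+\eps_i]$ given only $l|_{[0,t_i]}$. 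This determines $m|_{[0,t_i+\eps_i]}$; feeding this into $\MJWPhi$ (no-lookahead again) determines $l|_{[0,t_i+\eps_i]}$. Set $t_{i+1}=t_i+\eps_i$. The key structural point to verify at each step is that the extension is \emph{forced}, i.e.\ the new pieces of $m$ and $l$ do not depend on any choices — this is precisely what local finiteness (for the man) and no-lookahead (for the lion) give us, and it is what will eventually yield uniqueness.

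Let $t^*=\sup_i t_i=\sum_i\eps_i$. On $[0,t^*)$ the paths $l$ and $m$ are now defined and Lipschitz; since $X$ is complete and Lipschitz paths are uniformly continuous, they extend (uniquely) to continuous Lipschitz paths on $[0,t^*]$. I then need to argue that $t^*=\infty$. Suppose not; then we have a legitimate consistent pair on the compact interval $[0,t^*]$, and we can restart the construction from time $t^*$: local finiteness of $\MJWPsi$ applied to $l|_{[0,t^*]}$ gives a further positive commitment time $\eps^*>0$, pushing the construction strictly past $t^*$ and contradicting the definition of $t^*$ as the supremum. (One should be slightly careful here: a priori the $\eps_i$ could be shrinking fast enough that the construction `stalls' at a finite time, which is exactly the scenario this paragraph rules out — the construction never actually stalls because at any finite stopping point local finiteness hands us more room.) Hence $l$ and $m$ are defined on all of $[0,\infty)$, are Lipschitz, start at the prescribed points, and satisfy $\MJWPhi(m)=l$ and $\MJWPsi(l)=m$ by construction.

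For uniqueness: suppose $(l,m)$ and $(l',m')$ are two fixed-point pairs. Let $\tau=\sup\{t:\ l,l'\text{ agree on }[0,t]\text{ and }m,m'\text{ agree on }[0,t]\}$. If $\tau<\infty$, then by continuity $l,l'$ (resp.\ $m,m'$) agree on the closed interval $[0,\tau]$; now local finiteness of $\MJWPsi$ forces $m=\MJWPsi(l)$ and $m'=\MJWPsi(l')$ to agree on $[0,\tau+\eps]$ for some $\eps>0$, and then no-lookahead forces $l=\MJWPhi(m)$ and $l'=\MJWPhi(m')$ to agree on $[0,\tau+\eps]$ as well, contradicting maximality of $\tau$. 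So $\tau=\infty$ and the pairs coincide.

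**Main obstacle.** The one genuinely delicate point is the passage through limit times — showing the construction does not get stuck at some finite $t^*<\infty$. The resolution is the observation that local finiteness is a statement about \emph{every} finite time, not just $t=0$: whenever we have built a consistent pair up to a compact interval, we get a strictly positive amount of further commitment. So the supremum of reachable times cannot be a finite number, which forces $t^*=\infty$. Everything else — the consistency at each step, the Lipschitz property being preserved under the sup limit, the use of completeness to extend the path to the closed interval — is routine. Worth a remark in passing: completeness is used only to guarantee that the limiting values $l(t^*)=\lim_{t\uparrow t^*}l(t)$ and $m(t^*)$ actually exist in $X$; compactness is not needed here, in contrast to Lemma~\ref{l:aa}.
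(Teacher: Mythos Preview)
Your approach is correct and is essentially the paper's own argument: local finiteness to push forward, completeness to pass through limit times, a supremum argument to force $t^*=\infty$, and an infimum-of-disagreement argument for uniqueness. The paper merely organises it slightly differently, first proving that any two compatible partial pairs on nested intervals must agree on the smaller one, and then taking $s$ to be the supremum over \emph{all} $t$ for which a compatible pair on $[0,t]$ exists.

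One genuine wrinkle to tidy up: as you have written it, $t^*=\sup_i t_i$ is the supremum of a specific $\omega$-indexed sequence, so producing a compatible pair on $[0,t^*+\eps^*]$ does not literally contradict that definition --- it just means the construction needs to continue to stage $\omega+1$, then possibly $\omega+2$, and so on. Your opening parenthetical about a countable-ordinal-indexed construction is exactly the fix (the $t_\alpha$ are strictly increasing reals, so the process terminates at some countable ordinal), but the body of the argument silently drops back to natural-number indices. The cleaner route, and the one the paper takes, is to define $t^*$ from the outset as the supremum of all $t$ admitting a compatible pair on $[0,t]$; your uniqueness argument already shows these partial pairs are nested, completeness gives a compatible pair on $[0,t^*]$, and then your restart step yields an honest contradiction.
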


The reader will see that the proof below would also apply with lion
and man interchanged. It follows that if one player has a locally
finite winning strategy then the other player does not have any winning
strategy.

We need one further definition: a {\em partial lion path} is the
restriction of any lion path to any interval $[0,t]$, and similarly
for a {\em partial man path}. If $l$ and $m$ are partial lion and
man paths defined on the same interval we say they are {\em
  compatible} (for given $\MJWPhi$ and $\MJWPsi$) if $\MJWPsi(l)=m$
and $\MJWPhi(m)=l$. (The notation $\MJWPhi(m)$ is, strictly speaking,
an abuse of notation, but the no lookahead rule ensures that its
meaning is clear.)

\begin{proof}

Suppose that $l,m$ are a compatible pair of partial paths on $[0,t_0]$
and $l',m'$ are a compatible pair on $[0,t_1]$ with $t_1>t_0$. We
claim that $l'|_{[0,t_0]}=l$ and $m'|_{[0,t_0]}=m$. Indeed, suppose
not. Then let
\[
s=\inf\{t:l(t)\not=l'(t)\text{ or }m(t)\not=m'(t)\}.
\]

Since $l(0)=l'(0)$ and $m(0)=m'(0)$ we see that $0\le s\le t_0$. Since
all the paths are continuous we have $l(s)=l'(s)$ and $m(s)=m'(s)$. In
particular, $s<t_0$.  Now, $\MJWPhi$ is locally finite, so there
exists some $\eps>0$ (chosen so that $s+\eps<t_0$) such that
$\MJWPhi(m')|_{[0,s+\eps]}$ depends only on $m'|_{[0,s]}$. Since
$m|_{[0,s]}=m'|_{[0,s]}$ we see that
$\MJWPhi(m')|_{[0,s+\eps]}=\MJWPhi(m)|_{[0,s+\eps]}$: in other words
$l'|_{[0,s+\eps]}=l|_{[0,s+\eps]}$. Finally, since $\MJWPsi$ is a
strategy we see that
$\MJWPsi(l')|_{[0,s+\eps]}=\MJWPsi(l)|_{[0,s+\eps]}$, that is
$m'|_{[0,s+\eps]}=m|_{[0,s+\eps]}$. This contradicts the definition of
$s$, thus establishing our claim.

Now consider the collection of all compatible pairs of partial paths. Let 
\[
s=\sup \text{ $\{t:\exists$ a compatible pair of partial paths $l,m$ on $[0,t]$\}}
\]
and suppose for a contradiction that $s$ is finite.  First, we observe
that, by completeness, we can find we can find unique paths $l,m$ on
$[0,s]$ extending this collection of compatible partial paths.  Moreover
$\MJWPhi(m)$ is also a path which extends the collection, so
$\MJWPhi(m)=l$. Similarly, $\MJWPsi(l)=m$, and so we have a compatible pair of
partial paths on $[0,s]$.

Secondly, we can repeat the above argument to find a pair that agree
on $[0,s+\eps]$ for some $\eps>0$, contradicting the definition of $s$.

Thus $s=\infty$, and it follows, by the earlier claim, that our
infinite paths $l$ and $m$ exist.  From the nature of this
construction it is clear that the paths $l$ and $m$ are unique.
\end{proof}

We now give an example of a game where this theorem is useful: note that
in this example the time that the player commits to doing something
does depend on the current position of the adversary (unlike in the
lion and man game).

The game is called `porter and student'. The game is played in the box
$[-1,1]^2$, with say the student starting at the origin $(0,0)$ and
the four porters at $(\pm1,0)$ and$(0,\pm1)$. The porters are
restricted to the boundary of the box: that is, points with (at least)
one coordinate $\pm1$. As usual, all players can run at
speed 1. The student's aim is to reach the boundary of the box
without being caught by a porter when he reaches it. If he gets
caught, or if he never reaches the boundary, then he loses.

There is an obvious strategy for the porters, namely: each porter
stays on the side of the box he starts on and keeps the other
coordinate the same as the student's. However, this is not locally
finite.

\begin{theorem}
  In the porter and student game the porters have a locally finite
  winning strategy. In particular, the student does not have a winning
  strategy.
\end{theorem}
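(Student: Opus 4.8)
The plan is to exhibit a \emph{locally finite} winning strategy for the porters; the ``in particular'' clause then follows at once from Proposition~\ref{p:lf} (in the form noted in the remark after it, with the two players interchanged). Write the student's position at time $t$ as $(x(t),y(t))$, and let $d(t)=1-\max(|x(t)|,|y(t)|)$ be the distance from the student to the boundary of the box. The porters act in committed rounds: they fix times $0=s_0<s_1<s_2<\cdots$, where $s_{k+1}=s_k+\ell_k$ with $\ell_k=\beta\big(d(s_k)\big)>0$ for a fixed function $\beta$ to be specified (it will satisfy $\beta(d)<d$ and $\beta(d)\to0$ as $d\to0$); note $s_{k+1}$ depends only on the student's path on $[0,s_k]$. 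During the round $[s_k,s_{k+1}]$ each porter runs at full speed along its own side towards the point of that side nearest to the student's position at time $s_k$ --- so the right-hand porter $R$ heads for $(1,y(s_k))$, the top porter for $(x(s_k),1)$, and so on --- and waits there once it arrives. This is manifestly locally finite, since the porters' motion on $[0,s_{k+1}]$ is determined by the student's path on $[0,s_k]$ and $s_{k+1}>s_k$; if $\sup_k s_k$ is finite we freeze each porter at the (continuous) limit of its position for all later times.

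Two things are then immediate. First, as $\ell_k<d(s_k)$ and the student has unit speed, the student cannot reach the boundary during any round, so it lies in the open box throughout $[0,\sup_k s_k)$; hence if $\sup_k s_k=\infty$ the student never reaches the boundary and the porters win. Secondly, if $\sup_k s_k=T^*<\infty$ then $\ell_k\to0$, so $d(s_k)\to0$, so by continuity $(x(T^*),y(T^*))$ lies on the boundary and, the student being interior beforehand, is the first boundary point it meets. So in every case the \emph{only} way the student can reach the boundary is at time $T^*$, and it remains to show that some porter is then at the student's position.

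Say the student reaches the right side at $(1,y^*)$ (the other cases are symmetric, and if $y^*=\pm1$ it is heading into a corner, where both adjacent porters converge, so assume $|y^*|<1$). Then $d(s_k)=1-x(s_k)$ for large $k$, and it suffices to prove the vertical tracking error $\mu_k:=|R(s_k)_y-y(s_k)|$ tends to $0$ (then $R(s_k)\to(1,y^*)$, so $R(T^*)=(1,y^*)$ by continuity). The point is that $R$ and the student have equal speed, so $\mu_k$ never grows by more than the current round length, and grows only while the student moves its $y$-coordinate fast; but genuine progress towards $x=1$ precludes full-speed $y$-motion, and then $\mu$ strictly decreases. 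Concretely, writing $(\Delta x_k,\Delta y_k)$ for the student's displacement over $[s_k,s_{k+1}]$, a short calculation in the regime $\mu_k>\ell_k$ gives $\mu_{k+1}\le\mu_k-\ell_k+|\Delta y_k|\le\mu_k-\Delta x_k^2/(2\ell_k)$, while $1-x(s_{k+1})=1-x(s_k)-\Delta x_k$. Since the student must accumulate horizontal displacement at least $1-x(s_K)$ over $[s_K,T^*)$ using total time only $T^*-s_K$ (and $T^*-s_K\ge1-x(s_K)$, the straight-line distance), a Cauchy--Schwarz estimate on $\sum\Delta x_k^2/\ell_k$ forces $\mu_k\to0$ --- \emph{provided} the rounds are short enough relative to $d(s_k)$; for this one takes $\beta$ quadratically small, e.g.\ $\beta(d)=d^2/4$, and it is cleanest to run the bookkeeping with each porter's error measured against the distance to its own side.

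The hard part will be exactly this calibration of the round lengths so that the tracking estimate closes. A naive choice such as $\ell_k=\frac12 d(s_k)$ does \emph{not} work: the student can first lure $R$ off by an amount of order $1$ while there is ample room, and then creep towards the side so slowly that, $R$'s rounds shrinking faster than it can recover, a fixed gap survives to the end. Making $\beta$ decay fast enough (quadratically in the distance to the relevant side) is what forces the student's evasive $y$-motion to be paid for out of the budget it needs for reaching the side, driving the gap to zero. Once this quantitative lemma is in hand, the remaining ingredients --- local finiteness, the two trivial cases, the corner case, and the appeal to Proposition~\ref{p:lf} for the final clause --- are routine.
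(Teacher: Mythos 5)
Your structural framework (committed rounds, local finiteness, the reduction to showing the tracking error $\mu_k\to0$, and the appeal to Proposition~\ref{p:lf}) is sound, but there is a genuine gap exactly where you flag ``the hard part'': the quantitative tracking lemma is asserted, not proved, and the sketch you give does not close. The inequality $\mu_{k+1}\le\mu_k-\Delta x_k^2/(2\ell_k)$ only bounds the decrease of $\mu$ from \emph{above}; to get $\mu_k\to0$ you need the accumulated decrease to be at least $\mu_K$, and the Cauchy--Schwarz step gives $\sum_k\Delta x_k^2/\ell_k\ge\big(\sum_k\Delta x_k\big)^2\big/\sum_k\ell_k=(1-x(s_K))^2/(T^*-s_K)$, which is useful only with an \emph{upper} bound on the remaining time $T^*-s_K$. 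No such bound exists: the student may creep towards the side, taking $\Delta x_k=\alpha_k\ell_k$ with $\sum_k\alpha_k^2$ arbitrarily small while $\sum_k\alpha_k\ell_k=1-x(s_K)$ and $\sum_k\ell_k$ is still finite, so that he reaches the side in finite time having paid an arbitrarily small tracking penalty. What actually constrains him in that scenario is a different mechanism entirely: slow $x$-progress forces him either to spend his speed on $y$-motion (and the box gives only $2$ units of $y$-room, so he must turn around, and at each turnaround the lag collapses to $O(\ell_k)$ because he runs back through the porter's target) or to waste speed (and then $\mu_{k+1}=\mu_k-\ell_k+|\Delta y_k|$ erodes the lag directly). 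Turning this three-way tension ($x$-progress, $y$-room, and the a priori bound $\mu_k\le\sup_j\ell_j$) into a proof requires genuinely delicate bookkeeping; your proposal neither carries it out nor demonstrates that $\beta(d)=d^2/4$ makes the constants work, and it is not clear that it does.

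The paper's proof avoids all of this with a different invariant: each porter guarantees that at every time his distance to each endpoint of his own side is at most the student's distance to that endpoint. This is maintained by a locally finite recipe --- stand still for a time equal to the smaller of the two slacks when both are strictly positive, and when tied on one endpoint run at full speed towards it for half the slack at the other --- and it wins immediately, since a point of a segment that is at least as close to both endpoints as another point of that segment must coincide with it. If you want to salvage the projection-tracking route you must supply the missing quantitative lemma; adopting the endpoint-distance invariant is far cleaner.
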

We remark that the `in particular' is by the obvious analogue of
Proposition~\ref{p:lf} for this game.

\begin{proof}
  We give a locally finite winning strategy for the porters. We
  describe it for the porter on the left side; the others follow the
  same strategy rotated.

The porter aims to make sure that, at all times, his distance from the
top corner is at most the student's, and similarly for his distance
from the bottom corner.

If the student's distance from each corner is strictly less than the
porter's then the porter stays still for time 
\[
\min\{d(p,(0,1))-d(s,(0,1),
d(p,(0,-1))-d(s,(0,-1)\}.
\]
If the distance from the student to the top corner is the same as the
porter's then the porter runs at full speed towards the top corner. He
does this for time 
\[
\tfrac12(d(p,(0,-1))-d(s,(0,-1))>0.
\]
Similarly if the student's distance to the bottom corner is the same as
the porter's.

It is easy to check that this does give rise to a well-defined locally
finite strategy (for example, by considering the supremum of the times
up to which this gives a well-defined locally finite strategy), with
the porter satisfying his aim of making sure he stays at least as near
to each of his corners as the student. Hence if the student reaches
this edge he is caught by the porter.
\end{proof}

This gives an interesting contrast. Suppose that a porter is guarding
an infinite line from a student: so the porter lives on the $y$-axis,
and the student is in the left half-plane. The porter of course has a
winning strategy (keeping his $y$-coordinate the same as that of the
student), while the student also has a winning strategy: first achieve
a different $y$-coordinate to that of the porter, by using the `get
out from underneath' idea of Section 3, and then run straight to the
$y$-axis at a very small angle.  But if instead the porter is guarding
just a finite interval, then the student has no winning strategy
(because, as we have seen, the porter has a locally finite winning
strategy).


\section{Thanks}
We are extremely grateful to Hallard Croft for suggesting the problem in
the first place and for many interesting conversations. We would also like
to thank Robert Johnson for many interesting conversations.

\end{document}